\documentclass[10pt]{article}

\usepackage[margin=1.0in]{geometry}
\usepackage{amsmath,amssymb,amsthm,mathtools}
\usepackage{xcolor}
\usepackage{enumitem}
\usepackage[colorlinks=true,linkcolor=blue!55!black,citecolor=blue!55!black]{hyperref}
\usepackage{comment}
\usepackage{longtable}
\usepackage{array}
\usepackage{tikz}
\usetikzlibrary{arrows.meta,positioning,fit,backgrounds}

\usepackage{setspace}

\theoremstyle{plain}                 % uncomment this — it was commented out
\newtheorem{theorem}{Theorem}[section]
\newtheorem{lemma}[theorem]{Lemma}

\theoremstyle{definition}

\theoremstyle{remark}

\newtheoremstyle{mainthm}
  {1.2\topsep}   % space above
  {1.2\topsep}   % space below
  {\itshape}     % body font
  {}             % indent
  {\bfseries}    % head font
  {.}            % punctuation after head
  {\newline}     % space after head — \newline puts the body on its own line
  {}             % head spec (empty = default)
\theoremstyle{mainthm}
\newtheorem{theoremABC}{Theorem}

\theoremstyle{plain}                 % restore for anything declared later

\newcommand{\HH}{\mathbb{H}}
\newcommand{\PP}{\mathbb{P}}
\newcommand{\CC}{\mathbb{C}}

\newcommand{\ZZ}{\mathbb{Z}}
\newcommand{\Res}{\operatorname{Res}}
\newcommand{\ord}{\operatorname{ord}}

\newcommand{\Gz}{\Gamma}
\newcommand{\hgz}{\mathbf{H}_\Gz}
\newcommand{\Xg}{X_{\Gamma}}

\newcommand{\PSL}{\mathrm{PSL}}
\newcommand{\Stab}{\mathrm{Stab}}
\newcommand{\vol}{\mathrm{vol}}

\definecolor{newred}{RGB}{190,25,25}

\title{\bfseries On a generating function of the basis of weakly holomorphic functions on an elliptic curve}
\author{Joshua S. Friedman\footnote{The views expressed in this article are the author's own and not those of the U.S. Merchant Marine Academy, the Maritime Administration, the Department of Transportation, or the United States government.}, Jay Jorgenson\thanks{The second named author acknowledges grant support from PSC-CUNY Awards 67415-00\,55 and 68462-00\,56, which are jointly funded by the Professional Staff Congress and The City University of New York.} and Lejla Smajlovi\'c\thanks{{Generative AI Statement: a large language model (Anthropic) was used as an assistive software tool for nuerical investigation, Python/SageMath and Pari/GP script generation, and logical error checking. All main mathematical results were obtained by the authors. All scripts used to perform symbolic calculations for this paper were independently verified and proven correct by the authors, who assume full responsibility for all manuscript content.}}}
\date{\today}

\begin{document}
\maketitle

%\doublespacing

\begin{abstract}
Let $\Gz$ be a cofinite Fuchsian group whose quotient $\Gz\backslash\HH$ has genus one and a
single cusp, normalized to be at $\infty$ with width one. Let $\Xg$ be its smooth
compactification, which is an elliptic curve over $\CC$. Let $x,y$ be canonical generators of
the function field on $\Xg$, which have poles of order $2,3$ respectively at the cusp and which
{satisfy} a generalized Weierstrass relation $y^2+(a_1x+a_3)y=x^3+a_2x^2+a_4x+a_6$. Let $f$ be
the unique (up to scale) weight two cusp form for $\Gz$, normalized so that the associated
holomorphic differential is $\omega=dx/(2y+a_1x+a_3)$. With all this, we prove that the generating
function identity
\[
\frac{\bigl(y(\tau)+y(z)+a_1x(z)+a_3\bigr)f(z)}{x(z)-x(\tau)}=\sum_{m\ge0}\Phi_m(\tau)q_z^m
\]
defines a family $\{\Phi_m\}_{m\ge0}$ of weakly holomorphic modular functions on $\Xg$ such that
the set $\{\Phi_0\}\cup\{\Phi_m\}_{m\ge2}$ is a canonical basis of the space
$M_{0,\Gz}^{!,\infty}$ of weakly holomorphic modular functions on $\Xg$ with poles supported only
at the cusp. As an application, by comparing the generating function of the family
$\{\Phi_m\}_{m\ge0}$ with the generating function of the Niebur--Poincar\'e series, we show that
the generating function of the special values of the Kloosterman zeta function at $1$ is the
holomorphic part of a certain weighttwo harmonic Maass form up to an additive constant.
\end{abstract}

\section{Introduction}\label{sec:intro}

For the full projective modular group $\PSL_2(\ZZ)$, the elliptic
modular invariant
\[
j(\tau)=q_\tau^{-1}+744+196884q_\tau+\cdots,
\qquad q_\tau=e^{2\pi i\tau},
\]
generates the function field of the modular curve $X(1)=\PSL_2(\ZZ)\backslash\HH$. This is a genus-zero Riemann surface with a cusp at $\infty$ of width one. Set $j_1(\tau):=j(\tau)-744$ and, for $m\geq 1$, let
$J_m(\tau):=j_1|T_m(\tau),$
where $T_m$ denotes the $m$-th Hecke operator. Together with
$J_0\equiv 1$, the functions $\{J_m\}_{m\geq 0}$ form a canonical
family of weakly holomorphic modular functions characterized by $J_m(\tau)=q_\tau^{-m}+O(q_\tau).$
Equivalently, for each $m\geq 1$ there is a unique monic polynomial
$F_m$ of degree $m$ such that $F_m(j_1(\tau))=J_m(\tau).$
These polynomials were introduced by Faber, who also gave their
explicit evaluation \cite{Fa1903}.

A remarkable identity of Asai, Kaneko, and Ninomiya \cite{AKN}
shows that the generating function of the family $\{J_m\}_{m\geq 0}$ can be expressed in terms of the $j$-invariant.
Namely, it is proved in \cite{AKN} that
\begin{equation}\label{eq:AKN}
\sum_{m\geq 0}J_m(\tau)q_z^{\,m}
=
-\frac{1}{2\pi i}\frac{j'(z)}{j(z)-j(\tau)}.
\end{equation}
The right-hand side is a meromorphic modular function of weight zero
in $\tau$ and a meromorphic modular form of weight two in $z$. Its
essential analytic feature is that it has a simple pole along the
diagonal $z=\tau$ and no off-diagonal poles. The identity \eqref{eq:AKN} has
played an important role in, among other topics, the study of traces
of singular moduli \cite{Za02} and divisors of modular forms
\cite{Al03,BKO04}.

The study of generating functions for canonical bases of weakly holomorphic modular forms originated primarily in the genus zero setting. For certain congruence subgroups of genus zero, explicit formulas for the generating functions of certain
even-weight canonical bases were established in \cite{DJ08} and \cite{HJ14}. These identities were subsequently extended to a family of genus zero congruence subgroups in \cite{BL15}, and generalized to all genus zero congruence groups via the Borcherds lift in \cite{Ye22}. In \cite{Ye19}, using properties of Green's functions, an analogue of \eqref{eq:AKN} is proved to hold true for the canonical basis of the space of weakly holomorphic modular functions for a generic genus zero Fuchsian group $\Gamma$, with poles supported at the cusp $\infty$ only.
This space is denoted by $M_{0,\Gamma}^{!,\infty}$.

The situation changes substantially in positive genus. If $X_\Gamma$ has positive genus, then its function field can no longer be
generated by a single modular function. In genus one, one may choose
generators $x$ and $y$ satisfying a generalized Weierstrass equation.
The most immediate candidate for an analogue of the
Asai--Kaneko--Ninomiya kernel would then be
\[
-\frac{1}{2\pi i}\frac{x'(z)}{x(z)-x(\tau)}.
\]
This candidate is not satisfactory. Since $x$ has degree two, the map
$x:X_\Gamma\rightarrow\mathbb P^1$ is a double cover, the denominator
vanishes not only at $z=\tau$, but also at the image of $\tau$ under
the corresponding deck involution. Consequently, this naive kernel has an
``unwanted'' off-diagonal pole.

In this paper we prove that the function $\hgz(z,\tau)$ defined for $z, \tau \in \Xg\setminus\{P_\infty\}$, $z\neq \tau$ by
\begin{equation}\label{eq:defnH}
\hgz(z,\tau):=\frac{\bigl(y(\tau)+y(z)+a_1x(z)+a_3\bigr)f(z)}{x(z)-x(\tau)}
\end{equation}
can be viewed as the genus one analogue of the Asai--Kaneko--Ninomiya kernel. Going further, we derive an identity relating $\hgz(z,\tau)$ and the generating function of the Niebur-Poincar\'e series associated to $\Gamma$ (which can also be viewed as an analogue of the Asai--Kaneko--Ninomiya kernel, see Section \ref{sec:intro-wh} below). From this identity, in case when $\Gamma$ is a genus one Atkin-Lenner group,  we deduce the following arithmetic application.

\begin{theoremABC}\label{thmI}
Let $N=p_1\cdots p_r$ be a square-free integer such that the Atkin-Lehner group $\Gamma_0(N)^+$ has genus one. Then the series
\begin{equation}\label{eq. B series}
24\sum_{m\ge1}\sigma(m)\prod_{\nu=1}^{r}\left(1-
\frac{p_\nu^{\alpha_{p_\nu}(m)+1}(p_\nu-1)}{\bigl(p_\nu^{\alpha_{p_\nu}(m)+1}-1\bigr)(p_\nu+1)}\right)q_z^{\,m}
+\frac{3\cdot2^{r}}{\pi\sigma(N)\Im(z)}-1
\end{equation}
is a weight two harmonic Maass form on $\Gamma_0(N)^+$.
\end{theoremABC}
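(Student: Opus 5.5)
Since $\Gamma_0(N)^+$ has genus one and a single cusp, the series in the statement should be a nonholomorphic Eisenstein-type object tied to the constant terms of the generating function of $\{\Phi_m\}$. I would identify it with $-\frac{1}{2\pi i}$ times the $\tau$-constant term of the generating function of the Niebur--Poincar\'e series (the weight-two ``Kloosterman zeta at $1$'' generating function), and then evaluate that constant term explicitly for Atkin--Lehner groups. The plan has three steps.

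\textbf{Step 1: compare the two generating functions.} Let $F_{-m}(\tau)$ be the Niebur--Poincar\'e series with principal part $q_\tau^{-m}$. By the general identity announced in the introduction, $\hgz(z,\tau)$ differs from $\sum_{m\ge1}F_{-m}(\tau)q_z^m$ by an explicit correction. Since $\Phi_m-F_{-m}$ is harmonic, bounded at the cusp, and has no principal part, it is a constant $c_m$. That constant is governed by the constant term of $F_{-m}$, which is proportional to the Kloosterman zeta value $Z(m,0;1)$. Taking the constant term in $\tau$ on both sides, and using that the constant term of $\hgz$ in $\tau$ is an explicit weight-two object in $z$ built from $f$ and $x,y$, I expect
\[
\sum_{m\ge1} c\, Z(m,0;1)\,q_z^m + \frac{3}{\pi\,\mathrm{vol}(\Gamma\backslash\HH)\Im(z)} + \text{const}
\]
to transform with weight two. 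The nonholomorphic term arises because the weight-two Eisenstein completion enters: $\Im(z)^{-1}$ comes from the residue of the Niebur series at $s=1$, which is $1/\mathrm{vol}$. For $\Gamma_0(N)^+$ we have $\mathrm{vol}=\pi\sigma(N)/(3\cdot 2^r)$, which gives exactly $3\cdot 2^r/(\pi\sigma(N)\Im z)$. Equivalently, the object is $\widehat{E}_2$ for $\Gamma_0(N)^+$, since the harmonic weight-two form with this shadow is unique up to adding a cusp form. Because the genus is one, that cusp form is the single $f$, which is absorbed in the identification.

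\textbf{Step 2: compute the Kloosterman coefficients.} For $\Gamma_0(N)^+$, I would compute $Z(m,0;1)$ by decomposing the double cosets of $\Gamma_0(N)^+$ over the $2^r$ Atkin--Lehner involutions $W_Q$, $Q\mid N$. Each contributes a Ramanujan-sum Dirichlet series in $c$ with a divisibility condition on $c$. Evaluating at $s=1$ yields $\sigma(m)/\zeta(2)$ times Euler factors at each $p\mid N$. Writing $m=p^{\alpha}m'$ and summing the local geometric series should produce the factor
\[
1-\frac{p^{\alpha+1}(p-1)}{(p^{\alpha+1}-1)(p+1)}.
\]
Normalizing so that the constant term is $-1$ should give the factor $24$, as for $E_2=1-24\sum\sigma(m)q^m$.

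\textbf{Step 3: conclude harmonicity.} The result is then a finite combination of $\widehat{E}_2|W_Q$ in the form $-\widehat{E}_2^{(N)+}$, which is weight two and annihilated by the weight-two Laplacian, as required by the statement.

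I expect the main obstacle to be Step 2, the local computation at $p\mid N$. It requires careful bookkeeping of which cosets $\begin{pmatrix}a&b\\c&d\end{pmatrix}$ with $\sqrt{Q}$ scalings occur and of their Ramanujan sums. I would first try the direct route: sum $\sum_{Q\mid N}\widehat{E}_2(Qz)\,Q/\sigma(N)$, which is manifestly invariant, and expand its Fourier coefficients $\sum_{Q}Q\,\sigma(m/Q)$ multiplicatively. This should reproduce the product formula and bypass the Kloosterman sums entirely. The remaining check is that this product matches the stated local factor, using $\sigma(p^\alpha)+p\,\sigma(p^{\alpha-1})$ relative to $(1+p)\sigma(p^\alpha)$.
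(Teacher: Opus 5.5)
Your proposal does prove the statement, but the argument that actually carries it is the ``direct route'' at the end of Step~2 together with Step~3. That is a genuinely different proof from the paper's.

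Put $\widehat E_2(z)=E_2(z)-\frac{3}{\pi\Im(z)}$ and $\widehat E_2^{(N)+}(z):=\sigma(N)^{-1}\sum_{Q\mid N}Q\,\widehat E_2(Qz)$.
\begin{itemize}
\item Each summand is weight-two invariant under $\Gamma_0(Q)\supseteq\Gamma_0(N)$.
\item The invariance you call manifest is a short matrix check. If $w$ is an integral Atkin--Lehner matrix for $e\mid N$, then $\bigl(\begin{smallmatrix}Q&0\\0&1\end{smallmatrix}\bigr)w=g\,\gamma\,\bigl(\begin{smallmatrix}Q'&0\\0&1\end{smallmatrix}\bigr)$, where $g=\gcd(Q,e)$, $Q'=Qe/g^{2}$ (again a divisor of the square-free $N$) and $\gamma\in\mathrm{SL}_2(\ZZ)$. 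So $w$ permutes the $2^{r}$ summands under the weight-two slash action.
\item Harmonicity is inherited from $\widehat E_2$.
\item The constant term is $\sigma(N)^{-1}\sum_{Q\mid N}Q=1$, and the non-holomorphic part is $-3\cdot2^{r}/(\pi\sigma(N)\Im(z))$.
\item The $m$-th coefficient is $-\frac{24}{\sigma(N)}\sum_{Q\mid(N,m)}Q\,\sigma(m/Q)=-24\,\sigma(m)\prod_{p\mid N}\frac{\sigma(p^{\alpha})+p\,\sigma(p^{\alpha-1})}{(p+1)\sigma(p^{\alpha})}$, with $\alpha=\alpha_p(m)$ and $\sigma(p^{-1}):=0$.
\end{itemize}
Since
\[
\frac{\sigma(p^{\alpha})+p\,\sigma(p^{\alpha-1})}{(p+1)\,\sigma(p^{\alpha})}=\frac{2p^{\alpha+1}-p-1}{(p+1)(p^{\alpha+1}-1)}=1-\frac{p^{\alpha+1}(p-1)}{(p^{\alpha+1}-1)(p+1)}
\]
(also for $\alpha=0$, where all three expressions equal $1/(p+1)$), the series in the statement is exactly $-\widehat E_2^{(N)+}$. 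So the check you left open does go through.

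You should drop Step~1 and the Kloosterman computation: they are not needed, and Step~1 as written is wrong.
\begin{itemize}
\item $\Phi_m-2\pi\sqrt m\,F_{-m}(\tau,1)$ is not constant. The Weierstrass gap forces $\Phi_m=q_\tau^{-m}-c_mq_\tau^{-1}+\cdots$ (Theorem~\ref{thmC}). The correct relation is $\Phi_m=2\pi\sqrt m\,F_{-m}(\tau,1)-2\pi c_mF_{-1}(\tau,1)+C(m)$, and the $F_{-1}$ terms are precisely what produce the multiple of $f(z)$ in \eqref{eq. identity H and NP}.
\item The weight-two behaviour in $z$ is only ``expected'' in your Step~1. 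In the paper it is imported from \cite{BJS25}, namely that $z\mapsto\mathcal H_\Gamma(z,\tau)+1/(\vol(\Gamma\backslash\HH)\Im(z))$ is a weight-two polar harmonic Maass form, and then combined with Lemma~\ref{lem:A}.
\item Your claim that such a form is unique up to a cusp form needs a moderate-growth hypothesis. Harmonic Maass forms in the paper's sense may grow linearly exponentially at the cusp.
\end{itemize}

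Comparison with the paper. The paper obtains the statement as a specialization of Theorem~\ref{cor:relation H NP}(iii). For every genus-one group with one cusp, $\mathcal B(z)-1+1/(\vol(\Gamma\backslash\HH)\Im(z))$ is weight-two harmonic, where $\mathcal B$ has coefficients $4\pi^2mL_1(-m,0)=2\pi\sqrt m\,B_0(1;-m)$. The paper then inserts the evaluation \eqref{B0 for A-L groups} of $B_{0,N}(1;-m)$ from \cite{JST,CJK23} and $\vol(\Gamma_0(N)^+\backslash\HH)=\pi\sigma(N)/(3\cdot2^{r})$.

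Your route uses neither the kernel $\hgz$, nor Niebur--Poincar\'e series, nor Kloosterman sums, nor the genus-one hypothesis. It proves the assertion for every square-free $N$ and exhibits the form as the Atkin--Lehner symmetrization of $-\widehat E_2$. This makes the analogy with $-E_2^{*}$ noted after the statement literal. What the paper's route supplies instead is the identification of these coefficients with special values of Selberg's Kloosterman zeta function, which is the point of the theorem there.

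The two routes are compatible. Both harmonic forms have the same non-holomorphic part and constant term, so their difference is a holomorphic weight-two form vanishing at the cusp, i.e.\ $\kappa f$ for some constant $\kappa$. Your computation together with Theorem~\ref{cor:relation H NP}(iii) therefore determines $2\pi\sqrt m\,B_{0,N}(1;-m)$ up to adding $\kappa c_m$. The cited evaluation amounts to $\kappa=0$.
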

Here $\sigma(m)$ is the sum of the divisors
of a positive integer $m$, and $\alpha_p(m)$ for the largest integer with $p^{\alpha_p(m)}\mid m$. Atkin Lehner groups are defined in Section \ref{sec: arith}.

Inserting $N=1$ formally, so that $r=0$ and the product is empty, turns \eqref{eq. B series} into
\[
24\sum_{m\ge1}\sigma(m)q_z^{\,m}+\frac{3}{\pi\Im(z)}-1
=-\Bigl(E_2(z)-\frac{3}{\pi\Im(z)}\Bigr)=-E_2^{*}(z),
\]
which is the classical weight two harmonic Maass form on $\PSL(2,\ZZ)$. These calculations are strictly formal since $N=1$ is not a genus one level, yet indicate that the Maass form \eqref{eq. B series} can be viewed as a genus one analogue of $E_2^{*}(z)$. Theorem \ref{thmI} is an arithmetic consequence of theorems \ref{thmC} -- \ref{cor:relation H NP} below.

Before stating our main results, we would like to emphasize that positive genus extensions of \eqref{eq:AKN} have previously been obtained in several arithmetic settings. For the hyperelliptic modular curves $X_0(\ell)$ with
$\ell\in\{11,17,19,23,29,31,41,47,59,71\}$, an analogue of the Asai--Kaneko--Ninomiya identity was
established in \cite{E-G09}. Some of these curves are also related to strong Weil curves and to
arithmetic phenomena involving dimensions of vertex operator subalgebras \cite{BM21}. In genus one,
\cite{JM19} studied generating functions for weakly holomorphic row-reduced echelon bases of
$M^{!,\infty}_{k,\Gz}$ for the prime levels $p\in\{11,17,19\}$ and obtained an explicit formula
using modular duality. This was extended in \cite{JKK23} to a broad class of arithmetic Fuchsian
groups generated by congruence subgroups and Atkin--Lehner involutions. In particular,
\cite[Theorem 1.14]{JKK23} expressed the corresponding generating function in terms of basis
elements and generators of the space of weight two cusp forms, while \cite[Theorem 1.16]{JKK23}
related it to Niebur--Poincar\'e series.

Our approach differs from these results in two aspects. First, it begins with the intrinsic
geometry of the elliptic curve and uses a Serre type duality, rather than a Fourier coefficients
duality between modular form bases. Second, it determines the polar structure of the kernel
$\hgz(z,\tau)$ directly, including the cancellation of the off-diagonal pole arising from the
double cover $x\colon\Xg\to\PP^1$. Moreover, our approach does not require any arithmetic input.

\subsection{{Main properties of $\hgz(z,\tau)$}}\label{sec:mainresults}

Let $\Gz\subset\PSL_2(\mathbb R)$ be a cofinite Fuchsian group (possibly with elliptic elements) whose quotient
$\Gz\backslash\HH$ has genus one and exactly one cusp, normalized to lie at $\infty$ with width
one.  Let $\Xg$ denote the smooth compactification of $\Gz\backslash\HH$ with $P_\infty$ the point
above the cusp. We write $q_\tau=e^{2\pi i\tau}$, and $M^{!,\infty}_{0,\Gz}$ for the space of
weakly holomorphic modular functions for $\Gz$ that are holomorphic on $\Xg\setminus\{P_\infty\}$ and have a finite order pole at $P_\infty.$

Let $x$ and $y$ be the normalized generators of the function field of $\Xg$; the normalization is chosen so that
\begin{align}
x(\tau)&=q_\tau^{-2}+s\,q_\tau^{-1}+\sum_{n\ge1}x_nq_\tau^{\,n},\label{eq:xexp}\\
y(\tau)&=q_\tau^{-3}+r\,q_\tau^{-1}+\sum_{n\ge1}y_nq_\tau^{\,n},\label{eq:yexp}
\end{align}
and let $a_1,\dots,a_6$ be the coefficients of the generalized Weierstrass relation \eqref{eq:F}.

Let $f$ be the weight two cusp form for $\Gz$ determined by $\omega=dx/(2y+a_1x+a_3)=-2\pi i
f(\tau)\,d\tau$.  Equivalently, $f$ is normalized so that its first Fourier coefficient at $\infty$
equals $1$, and we write
\begin{equation} \label{eq:f}
f(z)=\sum_{n\ge1}c_nq_z^{\,n}.
\end{equation}
With this notation, our first result is the following theorem.
\begin{theoremABC}\label{thmC}
Let $\tau\in\Xg\setminus\{P_\infty\}$, and let $z$ lie in the neighbourhood $U_\tau$ of $P_\infty$
{determined by $|x(z)|>|x(\tau)|$}\footnote{Since $x(z)\to\infty$ as $z\to P_\infty$, the neighbourhood $U_\tau$ must exist.}. Define $\Phi_m(\tau)$ by the
expansion
\begin{equation}\label{eq:genfn}
\hgz(z,\tau)=\sum_{m\ge0}\Phi_m(\tau)\,q_z^{\,m}.
\end{equation}
Then $\Phi_0\equiv1$ and $\Phi_1\equiv s$.  Furthermore, for every $m\ge2$ the function $\Phi_m$ is a weakly
holomorphic modular function on $\Xg$, holomorphic on $\Xg\setminus\{P_\infty\}$, and is a
polynomial in $x(\tau),y(\tau)$ of degree $m$, with Fourier expansion
\begin{equation}\label{eq:Phiqexp}
\Phi_m(\tau)=q_\tau^{-m}-c_mq_\tau^{-1}+s\,c_m+O(q_\tau).
\end{equation}
\end{theoremABC}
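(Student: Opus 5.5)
The plan is to study $\hgz(z,\tau)$ as a meromorphic function of $\tau$ on $\Xg$ with $z$ fixed, locate all its poles, and then read off $\Phi_m$ from a partial-fraction decomposition in $q_\tau$.

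\textbf{Step 1: algebraicity of $\Phi_m$.} For $z\in U_\tau$ expand
\[
\frac{1}{x(z)-x(\tau)}=\sum_{k\ge0}\frac{x(\tau)^k}{x(z)^{k+1}}.
\]
Since $x(z)^{-k-1}=O(q_z^{2k+2})$, only finitely many $k$ contribute to a given power $q_z^m$. The numerator $y(\tau)+y(z)+a_1x(z)+a_3$ is affine in $y(\tau)$. Hence each coefficient $\Phi_m(\tau)$ is a finite polynomial in $x(\tau),y(\tau)$, with coefficients read off from the $q_z$-expansions of $x,y,f$. So $\Phi_m$ is a modular function holomorphic on $\Xg\setminus\{P_\infty\}$. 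Once we know its pole order at $P_\infty$ is $m$, the Riemann--Roch description $L(mP_\infty)=\mathrm{span}\{x^iy^j: 2i+3j\le m,\ j\le1\}$ gives the degree statement.

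\textbf{Step 2: poles of $\tau\mapsto\hgz(z,\tau)$.} Fix $z$ with $\Im z$ large. The denominator vanishes at $\tau=z$ and at $\tau=\iota(z)$, where $\iota$ is the deck involution of $x$, so that $y(\iota z)=-y(z)-a_1x(z)-a_3$. At $\tau=\iota(z)$ the numerator also vanishes, so this pole cancels. That is the key point, and it relies on the numerator being exactly $y(\tau)-y(\iota z)$.

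At $\tau=z$ the pole is simple. Using $dx=-2\pi i f\,(2y+a_1x+a_3)\,d\tau$, its residue in the variable $\tau$ is
\[
\frac{(2y(z)+a_1x(z)+a_3)f(z)}{-x'(z)}=\frac{1}{2\pi i}.
\]
This is exactly the residue of $q_\tau/(q_\tau-q_z)$ at $\tau=z$.

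At $P_\infty$ we use \eqref{eq:xexp}, \eqref{eq:yexp} and $y(\tau)/x(\tau)=q_\tau^{-1}-s+O(q_\tau)$. This gives
\[
\hgz(z,\tau)=-f(z)\,q_\tau^{-1}+s\,f(z)+O(q_\tau).
\]

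\textbf{Step 3: extracting the expansion.} Set $K(\tau)=\hgz(z,\tau)-q_\tau/(q_\tau-q_z)$. In a punctured disc in $q_\tau$ around $0$ containing $q_z$, the function $K$ is holomorphic except at $q_\tau=0$. The pole at $q_\tau=q_z$ is removed by Step 2, and the $\Gz$-translates of $z$ other than $z+n$ lie far from the cusp. Since $q_\tau/(q_\tau-q_z)\to0$ as $q_\tau\to0$,
\[
K(\tau)=-f(z)q_\tau^{-1}+s f(z)+O(q_\tau),
\]
where the $O(q_\tau)$ term is holomorphic in $q_\tau$ and jointly analytic in $q_z$. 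For $|q_z|<|q_\tau|$ we expand $q_\tau/(q_\tau-q_z)=\sum_{m\ge1}q_z^mq_\tau^{-m}$. Inserting $f(z)=\sum_{n\ge1} c_nq_z^n$ and comparing $q_z^m$ coefficients yields
\[
\Phi_m(\tau)=q_\tau^{-m}-c_mq_\tau^{-1}+s\,c_m+O(q_\tau),
\]
together with $\Phi_0=1$ (as $c_0=0$). For $m=1$ the principal part cancels since $c_1=1$, so $\Phi_1$ is a holomorphic function on compact $\Xg$, hence the constant $s$.

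\textbf{Main obstacle.} The hard part is the bookkeeping of domains. The defining expansion holds on $U_\tau=\{|x(z)|>|x(\tau)|\}$, while Step 3 uses $|q_z|<|q_\tau|$ with both $\tau$ and $z$ near the cusp. I would handle this by noting that both regions contain the set where $\Im z\gg\Im\tau\gg0$. On that set the two double expansions agree by uniqueness of Laurent coefficients in $q_z$. Since $\Phi_m$ is already known to be a global polynomial in $x,y$ by Step 1, the identity of $q_\tau$-expansions then holds everywhere by analytic continuation. Some care is also needed to justify termwise differentiation and summation, that is, uniform convergence on compact subsets of these regions.
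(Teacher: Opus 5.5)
Your proposal is correct, but it reaches \eqref{eq:Phiqexp} by a genuinely different route from the paper.

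The paper stays in the $z$-variable throughout:
\begin{itemize}
\item it gets the leading term $q_\tau^{-m}$ from the expansion of $1/(x(z)-x(\tau))$;
\item it proves the reproducing identity $h(\tau)=[q_z^0]\bigl(h(z)\hgz(z,\tau)\bigr)$ by the residue theorem for $-2\pi i\,h(z)\hgz(z,\tau)\,dz$, using Lemma~\ref{lem:A};
\item it applies this to $h=\Phi_m$ and uses linear independence of $\Phi_0,\Phi_2,\dots,\Phi_{m-1}$ to kill the coefficients of $q_\tau^{-k}$ for $2\le k\le m-1$ and to get $a^{(m)}_0=-s\,a^{(m)}_{-1}$;
\item finally it uses $\Res_{P_\infty}(\Phi_m\omega)=0$ (Serre duality) to get $a^{(m)}_{-1}=-c_m$.
\end{itemize}

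You work in the $\tau$-variable, in the spirit of the classical Asai--Kaneko--Ninomiya argument. You match the diagonal pole with the Cauchy kernel $q_\tau/(q_\tau-q_z)$ and cancel the off-diagonal one. After that, everything is read off from the single cusp expansion $\hgz(z,\tau)=-f(z)q_\tau^{-1}+sf(z)+O(q_\tau)$. The terms $-c_m$ and $s\,c_m$ then appear simply as the $q_z^m$-coefficients of $-f(z)$ and $sf(z)$. The values of $\Phi_0$ and $\Phi_1$, and the vanishing intermediate coefficients, all come from the same computation.

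This is more direct: you need no reproducing formula, no linear-independence step, no Serre duality, and no monic $p_k$. The price is the two-variable bookkeeping, which is milder than you fear. For $\tau$ near the cusp, $z\mapsto\hgz(z,\tau)$ is holomorphic on the whole disc $|q_z|<|q_\tau|$ by Lemma~\ref{lem:A}. So the $\Phi_m(\tau)$ are just its Taylor coefficients at $q_z=0$, whatever region the expansion was first written on. Joint holomorphy of $K$ on $\{|q_z|<\rho\}\times\{0<|q_\tau|<\rho\}$ follows from separate holomorphy (Hartogs). Alternatively, write its $q_\tau$-Laurent coefficients as Cauchy integrals over a fixed circle $|q_\tau|=r$. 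The paper's route, conversely, produces Theorem~\ref{thmD} along the way, which it needs later anyway.

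One slip to fix: $q_\tau/(q_\tau-q_z)=\sum_{m\ge0}q_z^mq_\tau^{-m}$, and the $m=0$ term equals $1$. With your sum starting at $m=1$ you would get $\Phi_0=O(q_\tau)$, hence $\Phi_0\equiv0$ rather than $\Phi_0\equiv1$.
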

The following theorem shows that $\hgz(z,\tau)$ can be viewed as a reproducing kernel for functions from $ M^{!,\infty}_{0,\Gz}$.
\begin{theoremABC}\label{thmD}
Let $h\in M^{!,\infty}_{0,\Gz}$. Then, for every $\tau\in\Xg\setminus\{P_\infty\}$,
\begin{equation}\label{eq:repro}
h(\tau)=[q_z^{\,0}]\bigl(h(z)\hgz(z,\tau)\bigr).
\end{equation}
Consequently, if $g\in M^{!,\infty}_{0,\Gz}$ has Fourier expansion
$g(\tau)=\sum_{n\ge-N}g_nq_\tau^{\,n}$ for some integer {$N\ge0$}, then
\begin{equation}\label{eq:recovery}
g(\tau)=\sum_{m\ge0}g_{-m}\Phi_m(\tau),
\end{equation}
which is a finite sum with at most $N+1$ terms. In particular $\{\Phi_0\}\cup\{\Phi_m\}_{m\ge2}$ is a
basis of $M^{!,\infty}_{0,\Gz}$, and the coefficients of $g$ satisfy the identity
\begin{equation}\label{eq:gapconstraint}
g_{-1}=-\sum_{m\ge2}c_m\,g_{-m}.
\end{equation}

\end{theoremABC}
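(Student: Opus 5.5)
We prove Theorem~\ref{thmD} by a residue computation on $\Xg$, using the differential form $h(z)\hgz(z,\tau)\,dz$. Theorem~\ref{thmC} gives us what we need about $\hgz$: for fixed $\tau$, the function $z\mapsto \hgz(z,\tau)$ is a meromorphic weight two form in $z$, because $f(z)\,dz$ is a differential and the prefactor is a function. We first check its poles away from $P_\infty$. The denominator $x(z)-x(\tau)$ vanishes at $z=\tau$ and at $z=\iota\tau$, where $\iota$ is the deck involution of $x$. On the curve we have $y(\iota\tau)=-y(\tau)-a_1x(\tau)-a_3$, so at $z=\iota\tau$ the numerator factor $y(\tau)+y(z)+a_1x(z)+a_3$ vanishes and that pole cancels. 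At $z=\tau$ we compute the residue of the differential
\[
\eta_\tau(z):=\frac{y(\tau)+y(z)+a_1x(z)+a_3}{x(z)-x(\tau)}\cdot\frac{dx(z)}{2y(z)+a_1x(z)+a_3}.
\]
The numerator equals $2y(\tau)+a_1x(\tau)+a_3$ at $z=\tau$, and this cancels the second factor, leaving $dx/(x-x(\tau))$. Hence the residue is $1$, at least for $\tau$ not a ramification point of $x$. Since $-2\pi i f\,dz=\omega$, we get $\eta_\tau=-2\pi i\,\hgz(z,\tau)\,dz$.

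Now let $h\in M^{!,\infty}_{0,\Gz}$. The differential $h(z)\eta_\tau(z)$ has poles only at $z=\tau$ and at $P_\infty$. By the residue theorem on the compact surface $\Xg$, its residues sum to zero. The residue at $\tau$ is $h(\tau)$. At the cusp the local parameter is $q_z$, and $dq_z=2\pi i\,q_z\,dz$, so
\[
-2\pi i\,h\,\hgz\,dz=-h\,\hgz\,\frac{dq_z}{q_z}.
\]
Its residue at $q_z=0$ is therefore $-[q_z^0](h\hgz)$, which gives \eqref{eq:repro}. The expansion is legitimate because for $z\in U_\tau$ the series \eqref{eq:genfn} converges, and we only need finitely many terms since $h$ has finite polar order. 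For the ramification points of $x$ and possible elliptic-point subtleties, we either verify directly or extend by continuity in $\tau$, since both sides are holomorphic in $\tau$ on $\Xg\setminus\{P_\infty\}$.

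For \eqref{eq:recovery}, write $g(z)=\sum_{n\ge -N}g_nq_z^n$ and $\hgz=\sum_m\Phi_m(\tau)q_z^m$. The constant term of the product is $\sum_{m\ge0}g_{-m}\Phi_m(\tau)$, and the sum stops at $m\le N$. Using $\Phi_0=1$ and $\Phi_1=s$, we then compare the $q_\tau^{-1}$ coefficients on both sides via \eqref{eq:Phiqexp}. The left side contributes $g_{-1}$. On the right, $\Phi_1$ is constant and $\Phi_0$ is constant, so the coefficient is $\sum_{m\ge2}g_{-m}(-c_m)$ (for $m\ge 2$ the term $q^{-m}$ does not contribute at $q^{-1}$). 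This yields \eqref{eq:gapconstraint}.

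Spanning then follows: by \eqref{eq:recovery}, the $g_{-1}\Phi_1$ term is a constant, which we absorb into a multiple of $\Phi_0$. Linear independence of $\{\Phi_0\}\cup\{\Phi_m\}_{m\ge2}$ follows from the distinct leading terms $q_\tau^{-m}$.

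The main obstacle is making the residue argument rigorous uniformly in $\tau$. This includes the fixed points of $\iota$, where $\tau=\iota\tau$ and we need to confirm that the pole is still simple with residue $1$. It also includes justifying that \eqref{eq:genfn} is the Laurent expansion at the cusp, which we get from the choice of $U_\tau$ and the holomorphy of $\hgz$ in $z$ there.
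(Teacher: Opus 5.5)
Your proposal is correct and matches the paper's proof: you apply the residue theorem on $\Xg$ to $-2\pi i\,h(z)\hgz(z,\tau)\,dz$, with residue $h(\tau)$ at $z=\tau$ and $-[q_z^0](h\hgz)$ at the cusp, then read off the $q_z^0$ coefficient for \eqref{eq:recovery} and the $q_\tau^{-1}$ coefficient via \eqref{eq:Phiqexp} for \eqref{eq:gapconstraint}. The one difference is at the three fixed points of the involution: the paper computes the residue there directly using the $y$-form $M(z,\tau)f(z)/(y(z)-y(\tau))$ of the kernel (Lemma~\ref{lem:twoforms}), whereas you extend by continuity in $\tau$, which is also valid because the right-hand side equals $\sum_{m\le N}h_{-m}\Phi_m(\tau)$, a polynomial in $x(\tau),y(\tau)$.
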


\subsection{Comparison between weakly holomorphic and real analytic generalizations of Asai--Kaneko--Ninomiya identity} \label{sec:intro-wh}

According to the Weierstrass gap theorem, if genus of $X_\Gamma$ is positive, it is not possible to have a basis $\{\widetilde J_m\}_{m\geq 1}$ of the space $M_0^{!,\infty}$ of weakly holomorphic functions on $X_\Gamma$ with the poles located only at the cusp, such that all $\widetilde{J}_m$ are weakly holomorphic functions with the $q$-expansion being $q^{-m} + O(1)$, for $m\geq 1$. In other words, a complete analogue of the basis $\{J_m\}_{m\geq 1}$ does not exist in positive genus. For this reason, there are two possible ways to generalize the basis $\{J_m\}_{m\geq 1}$: (i) to retain weak holomorphicity in which case the $q$-expansion must be allowed to include additional term $q^{-1}$, or (ii) to retain the form of the $q$-expansion, in which case the basis is no longer weakly holomorphic but could be, for example, real analytic and harmonic.

The approach taken in this paper is to retain weak holomorphicity, meaning that the identity \eqref{eq:genfn} can be viewed as the weakly holomorphic counterpart of the left-hand side
of \eqref{eq:AKN}. In genus one, the functions $\Phi_m(\tau)$ have a Fourier series expansion \eqref{eq:Phiqexp}
and, because of the presence of the additional $q_\tau^{-1}$, in arithmetic settings do not in general form
a Hecke system. Nevertheless, the weakly holomorphic generalization from Theorem \ref{thmC} preserves the
three central features of the Asai--Kaneko--Ninomiya kernel: it is modular of weight two in the generating variable $z$ and of weight zero in the coefficient variable $\tau$; it has a unique pole on the diagonal; and
its Fourier coefficients generate, and explicitly reconstruct, the space $M^{!,\infty}_{0,\Gz}$.

The second generalization in arithmetic settings preserves a Hecke-system structure at the
cost of replacing weakly holomorphic modular functions by
real-analytic harmonic ones. This construction is provided by the
Niebur--Poincaré series.  Let us now compare these two generating series.

For $m\geq 1$, let $F_{-m}(\tau,s)$ denote the Niebur--Poincaré
series defined in \eqref{Def:Niebur Poinc series}. At $s=1$, the
normalized functions $
j_{\Gamma,m}(\tau)
:=
2\pi\sqrt{m}\,F_{-m}(\tau,1)
$
have Fourier expansions
$
j_{\Gamma,m}(\tau)
=
q_\tau^{-m}+O(1)$, as $\tau\rightarrow\infty$.
They therefore provide a natural real-analytic analogue of $J_m$ on
any finite-volume quotient $\Gamma\backslash\HH$ with a cusp at
$\infty$ of width one. Niebur's results show that these series can be
used to span weakly holomorphic modular functions \cite{Ni73},
although the individual Niebur--Poincaré series are generally
real-analytic rather than weakly holomorphic.

Let us denote the generating function of Niebur-Poincar\'e series by
\begin{equation}\label{eq:def Hcal intro}
\mathcal H_\Gamma(z,\tau)
:=
2\pi\sum_{m\geq 1}
\sqrt{m}\,F_{-m}(\tau,1)q_z^m.
\end{equation}
For the full modular group, \eqref{eq:def Hcal intro} specializes to the
Asai--Kaneko--Ninomiya generating series, up to the classical Eisenstein series $E_2(z)=1-24\sum_{k\geq 1}\sigma(k)q_z^k$.
Specifically, we have that
\[
\mathcal H_{\PSL_2(\ZZ)}(z,\tau)+E_2(z)
=
\sum_{m\geq 0}J_m(\tau)q_z^m.
\]

This real analytic generalization of \eqref{eq:AKN} was developed for congruence groups in
\cite{BK16,BKLOR18}, where the functions
$j_{N,m}(\tau)=2\pi\sqrt{m}F_{-m}(\tau,1)$ were interpreted as a
partial Hecke system of harmonic Maass functions for $\Gamma_0(N)$.
The results were further extended in \cite{BS25}, where an analogue
of \eqref{eq:AKN} was established for congruence groups of level $N$,
and the functions $j_{N,m}$ were shown to form a Hecke system with
respect to the equivariant Hecke operators introduced in
\cite{Ca10,Ca12}.

For general Fuchsian groups of positive genus with parabolic elements, 
\cite[Thm~1.1(2)]{BJS25} established that the mapping
$
\tau\longmapsto\mathcal H_\Gamma(z,\tau)
$
is a weight-zero polar harmonic Maass form, while
$
z\longmapsto
\mathcal H_\Gamma(z,\tau)
+
\frac{1}
{\vol(\Gamma\backslash\HH)\Im(z)}
$
is a weight two polar harmonic Maass form. The latter has a unique
pole in a fundamental domain at $z=\tau$ with polar part
$
\frac{ie_\tau}{2\pi(z-\tau)}.
$

Thus, $\hgz$ and $\mathcal H_\Gamma$ generalize the same type of classical
kernel in complementary ways. The first preserves weak
holomorphicity and yields the basis
$\{\Phi_m\}$. The second preserves the real-analytic Hecke-system
structure. We derive an explicit identity relating these two
generalizations. 

In order to state this relation, we recall some definitions.

For integers $m,n$, let $S(m,n;c)$ denote the Kloosterman sum
associated with $\Gamma$, as defined in \eqref{eq. Kloost sum}. For
$\Re(s)>1$, Selberg's Kloosterman zeta function \cite{Se65, GS83}, is defined by the
absolutely convergent series
\begin{equation}\label{eq:Selberg zeta intro}
L_s(m,n)
:=
\sum_{c>0}S(m,n;c)c^{-2s}.
\end{equation}
where the sum is taken over all positive left lower entries $c$ of matrices from the stabilizer group $\Gamma_\infty$ of the cusp $\infty$.  According to \cite[Theorem 1]{GS83}, the series \eqref{eq:Selberg zeta intro} possesses a meromorphic continuation to $\Re(s)>1/2$ which is holomorphic at $s=1$. We also write $B_0(1;-m)$ for the constant term coefficient in the Fourier expansion \eqref{Four exp Nieb} of
$F_{-m}(\tau,1)$.

With this notation, the relationship between the weakly holomorphic
kernel $\hgz$ and the Niebur--Poincaré generating function $\mathcal H_\Gamma$ is summarized in
the following Theorem.

\begin{theoremABC}
\label{cor:relation H NP}
With the notation above, for $\tau\in X_\Gamma\setminus\{P_\infty\}$ and $z$ in a neighborhood $U_\tau$ of $P_\infty$ one has
\begin{equation}\label{eq. identity H and NP}
\hgz(z,\tau)
=
\mathcal H_\Gamma(z,\tau)
+
f(z)
\bigl(
s+2\pi B_0(1;-1)-2\pi F_{-1}(\tau,1)
\bigr)
-2\pi
\sum_{m\geq 1}
\sqrt{m}\,B_0(1;-m)q_z^m
+1.
\end{equation}
In particular, the following statements hold.

\begin{itemize}
\item[(i)]
The harmonic function
$$
h_\Gamma(z,\tau)
:=
f(z)
\bigl(
s+2\pi B_0(1;-1)-2\pi F_{-1}(\tau,1)
\bigr)
-2\pi
\sum_{m\geq 1}
\sqrt{m}\,B_0(1;-m)q_z^m
+1
$$
completes the polar harmonic Maass form
$\tau\mapsto\mathcal H_\Gamma(z,\tau)$ to the function
$\tau\mapsto \hgz(z,\tau)$, which is holomorphic on
$X_\Gamma\setminus\{P_\infty\}$ except for a simple pole at
$\tau=z$.

\item[(ii)]
The weight two harmonic Maass form
$
z\longmapsto
h_\Gamma(z,\tau)
-
\frac{1}
{\vol(\Gamma\backslash\HH)\Im(z)}
$
completes the function $
z\longmapsto
\mathcal H_\Gamma(z,\tau)
+
\frac{1}
{\vol(\Gamma\backslash\HH)\Im(z)}
$
to the weight two form $z\mapsto \hgz(z,\tau)$, which is
holomorphic on $X_\Gamma$ except for a simple pole at $z=\tau$.

\item[(iii)]
Let
\begin{equation}\label{eq:def B intro}
\mathcal B(z)
:=
4\pi^2
\sum_{m\geq 1}
mL_1(-m,0)q_z^m
\end{equation}
be the generating function of the special values
$\{L_1(-m,0)\}_{m\geq 1}$ of Selberg's Kloosterman zeta function.
Then
\begin{equation}\label{eq:B harmonic intro}
z\longmapsto
\mathcal B(z)-1
+
\frac{1}
{\vol(\Gamma\backslash\HH)\Im(z)}
\end{equation}
is a weight two harmonic Maass form.
\end{itemize}
\end{theoremABC}

We find it interesting that a
sequence of special values of Selberg's Kloosterman zeta function can be encoded in a single modular object which equals holomorphic part of the weight two harmonic Maass form $\mathcal B(z)-1
+
\frac{1}
{\vol(\Gamma\backslash\HH)\Im(z)}$. This shows that the relation between the two generating kernels has
consequences beyond the coefficient-by-coefficient comparison in
\eqref{eq. identity H and NP}. An arithmetic specialization of this result is given in Theorem~\ref{thmI}.

\subsection{Organization of the paper} The paper is organized as follows. In Section \ref{sec:setup} we collect basic facts about the elliptic curve  $\Xg$ and its function field. Polar structure of the kernel $\hgz(z,\tau)$ is studied in Section \ref{sec:kernel}. Proofs of Theorems \ref{thmC} and \ref{thmD} occupy Section \ref{sec:family}. In Section \ref{sec. rel to NP} we prove Theorem \ref{cor:relation H NP} which we specialize to the setting of genus one Atkin-Lehner groups in Section \ref{sec: arith}. In Section \ref{sec: arith} we also present selected numerical calculations in which we record $q_\tau$-expansions of $\Phi_m(\tau)$ and the two-variable polynomials in $x,y$ which yield $\Phi_m$. For the sake of space, calculations are presented for the smallest and the largest genus one levels and for a selected set of $m$. We produced a large set of data covering all genus one levels and the range of $m$ up to $20$.  In addition the program that generates the data can compute all $q$ expansions to arbitrary accuracy and arbitrary values of $m.$ It can be made available upon request.

\section{The elliptic curve $\Xg$ and its function field}\label{sec:setup}
In this section, we collect basic facts regarding the smooth Weierstrass model of $X_\Gamma$ and prove some of its properties needed in the sequel.

\subsection{The smooth Weierstrass model and the deck involution}

Write $\Xg$ for the smooth genus one compactification (the elliptic curve) of $\Gz\backslash\HH$,
and denote the cusp $\infty$ with the local uniformizer $q_\tau=e^{2\pi i\tau}$. Since the genus of
$\Xg$ is $1$, the Weierstrass semigroup of $\Xg$ at $P_\infty$ is $\{0,2,3,4,\dots\}$, with the
single gap at $1$. We assume $\Gz$ to be fixed and omit the index $\Gz$ in the sequel (we keep it
only in the curve notation).

Recall that $x\in L(2P_\infty)\setminus L(P_\infty)$ and $y\in L(3P_\infty)\setminus L(2P_\infty)$ are the
unique functions with $q$-expansions \eqref{eq:xexp}--\eqref{eq:yexp}, fixed by the normalization
\[
[q^{-2}]x=1,\ [q^{0}]x=0;\qquad [q^{-3}]y=1,\ [q^{-2}]y=0,\ [q^{0}]y=0 .
\]
Recall that $s:=[q^{-1}]x$ and $r:=[q^{-1}]y$.

%\subsection{The Weierstrass relation and its coefficients}

The seven functions $1,x,y,x^2,xy,x^3,y^2$ have pole orders $0,2,3,4,5,6,6$ at $P_\infty$ and
therefore lie in $L(6P_\infty)$, a space of dimension $6$. Hence they satisfy the normalized
Weierstrass relation
\begin{equation}\label{eq:F}
F(x,y):=y^2+(a_1x+a_3)y-x^3-a_2x^2-a_4x-a_6=0
\end{equation}
identically on $\Xg$.

The following lemma is a folklore result. For the sake of completeness, we provide a short proof.

%\subsection{Smoothness of the Weierstrass model}

\begin{lemma}\label{lem:smooth}
The plane cubic \eqref{eq:F} is a smooth Weierstrass model of $\Xg$. Equivalently, the
discriminant of \eqref{eq:F} is nonzero. Consequently,
\begin{enumerate}[label=\textup{(\roman*)}]
\item the partials
\[
\frac{\partial F}{\partial y}=2y+a_1x+a_3,\qquad
\frac{\partial F}{\partial x}=a_1y-3x^2-2a_2x-a_4
\]
never vanish simultaneously on $\Xg$, and neither vanishes identically;
\item the map $(x,y)\colon\Xg\to\PP^2$, i.e.\ $[1:x:y]$, is a closed embedding. In particular
$(x,y)$ separates points: if $x(P)=x(P')$ and $y(P)=y(P')$ with $P,P'\ne P_\infty$ then $P=P'$.
\end{enumerate}
\end{lemma}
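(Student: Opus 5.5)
The plan is to use the Riemann–Roch description of the linear systems $L(nP_\infty)$ on the genus one curve $\Xg$ to show that $\phi=[1:x:y]$ is the standard embedding attached to the complete linear system $|3P_\infty|$. Smoothness of the image cubic then follows because it is isomorphic to the smooth curve $\Xg$.

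\textbf{Step 1 (degree and birationality).} Since $x$ has a single pole of order $2$ and $y$ a single pole of order $3$, we have $[\CC(\Xg):\CC(x)]=2$ and $[\CC(\Xg):\CC(y)]=3$. The degree $[\CC(\Xg):\CC(x,y)]$ divides both, so it is $1$, and $\CC(\Xg)=\CC(x,y)$. By Riemann–Roch in genus one, $\dim L(3P_\infty)=3$, and $1,x,y$ is a basis. Hence $\phi$ is the morphism defined by the complete linear system $|3P_\infty|$.

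\textbf{Step 2 (closed embedding).} I would apply the standard very-ampleness criterion: a divisor $D$ on a curve of genus $g$ with $\deg D\ge 2g+1$ is very ample. Here $\deg(3P_\infty)=3\ge 3$. Concretely, I would check that
\[
\dim L(D-P-Q)=\dim L(D)-2 \quad\text{for all } P,Q\in\Xg,
\]
allowing $P=Q$. This holds by Riemann–Roch because $\deg(D-P-Q)=1>2g-2=0$, so $\ell(D-P-Q)=1$. Therefore $\phi$ is a closed embedding. Its image is an irreducible plane curve of degree $\deg(3P_\infty)=3$, and it satisfies \eqref{eq:F}. Since $F$ is irreducible (it is quadratic in $y$ and its image is birational to $\Xg$), the image is exactly $\{F=0\}$ in its projective closure, and $\phi(P_\infty)=[0:0:1]$. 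This gives (ii): if $x(P)=x(P')$ and $y(P)=y(P')$ with $P,P'\ne P_\infty$, injectivity of $\phi$ forces $P=P'$.

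\textbf{Step 3 (smoothness and discriminant).} The image cubic is isomorphic to the smooth curve $\Xg$, so it is nonsingular. The point at infinity $[0:0:1]$ is always a smooth point of a Weierstrass cubic. Hence the affine partials $\partial F/\partial x$ and $\partial F/\partial y$ cannot vanish simultaneously at any affine point, which is the first part of (i). By the classical fact (Silverman, Prop.~III.1.4), a Weierstrass cubic is nonsingular exactly when $\Delta\neq 0$, so the discriminant is nonzero.

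\textbf{Step 4 (non-vanishing identically).} For the second part of (i), $2y+a_1x+a_3$ has a pole of order $3$ at $P_\infty$, so it is not identically zero. Similarly $a_1y-3x^2-2a_2x-a_4$ has a pole of order $4$, coming from the $-3x^2$ term, so it is not identically zero either.

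The only delicate point is the very-ampleness in Step 2, in particular tangent-vector separation when $P=Q$. This is covered uniformly by the Riemann–Roch dimension count above, so I expect no real obstacle. One could alternatively cite the standard fact that every genus one curve with a marked point is isomorphic, via its Weierstrass functions, to a smooth Weierstrass cubic.
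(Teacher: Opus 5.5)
Your proof is correct, but it takes a different route from the paper.

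The paper does not check the embedding for the normalized pair $x,y$ directly. It cites \cite[Prop.~III.3.1(a)]{Sil} to obtain some pair $x',y'$ realizing $\Xg$ as a smooth Weierstrass cubic. In Silverman's proof, smoothness comes from birationality of $[x':y':1]$ together with the fact that a singular Weierstrass cubic is rational. The paper then invokes Prop.~III.3.1(b) to pass to the normalized $x,y$ by the linear change of variables $x=u^2x'+t_1$, $y=u^3y'+u^2s_1x'+t_2$, which preserves smoothness and $\Delta\neq0$.

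You instead work with the normalized $x,y$ from the start. Since $1,x,y$ span $L(3P_\infty)$, $[1:x:y]$ is the map of the complete linear system $|3P_\infty|$. The Riemann--Roch count $\ell(3P_\infty-P-Q)=1$ then gives very ampleness, so smoothness of the image is inherited from $\Xg$, and $\Delta\neq0$ follows from Prop.~III.1.4.

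Your route is more self-contained and makes the auxiliary $x',y'$ and the change-of-variables step unnecessary. That step is in fact redundant in the paper too, since Silverman's construction applies to any pair with these pole orders. The paper's route is shorter because it delegates everything to the textbook.

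Two small remarks. First, your Step~1 birationality argument is not needed once very ampleness is established. Second, the parenthetical justification of the irreducibility of $F$ is muddled. The clean statement is this: the image is an irreducible curve of degree $3$ contained in the cubic $\{\bar F=0\}$, where $\bar F$ is the homogenization of $F$. Hence its defining polynomial is a scalar multiple of $\bar F$. This is exactly what you need to turn smoothness of the image into the simultaneous non-vanishing of $\partial F/\partial x$ and $\partial F/\partial y$ at affine points. The remark that the point at infinity is always smooth is then superfluous.
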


\begin{proof}
The genus one compact Riemann surface $\Xg$ can be realized as an algebraic curve over $\CC$ in the sense of \cite{Miranda} by \cite[IV.11]{FK}. In addition it has the marked point $P_\infty$, so the pair $(\Xg,P_\infty)$ is an
elliptic curve in the sense of \cite[\S III.3]{Sil}.
By \cite[Prop.~III.3.1(a)]{Sil} and the Riemann-Roch Theorem,  there exist
functions $x'\in L(2P_\infty)$ and $y'\in L(3P_\infty)$ such that $[x':y':1]$ is an
isomorphism of $\Xg$ onto a smooth plane cubic in generalized Weierstrass form. In
particular the discriminant of that model is nonzero, by \cite[Prop.~III.1.4(a)]{Sil}.

Since $\{1,x\}$ and $\{1,x'\}$ are both bases of $L(2P_\infty)$, and $\{1,x,y\}$ and $\{1,x',y'\}$
are both bases of $L(3P_\infty)$, the normalized generators $x,\,y$ given by \eqref{eq:xexp}--\eqref{eq:yexp} are related to $x',y'$ by a linear change of variables
\[
x=u^2x'+t_1,\qquad y=u^3y'+u^2s_1x'+t_2,\qquad u\neq0,
\]
as in \cite[Prop.~III.3.1(b)]{Sil}.  Hence the cubic \eqref{eq:F} is projectively equivalent to
a smooth Weierstrass cubic, is therefore itself smooth with nonzero discriminant, and
$[1:x:y]$ is again an isomorphism onto it.

Assertion (i) is the Jacobian criterion applied to \eqref{eq:F}: a point of the affine chart at
which $\partial F/\partial x$ and $\partial F/\partial y$ both vanish would be a singular point of
the cubic.  Neither partial vanishes identically, each being a nonzero element of $\CC(\Xg)$:
$\partial F/\partial y=2y+a_1x+a_3$ has a pole of order $3$ at $P_\infty$ and
$\partial F/\partial x=a_1y-3x^2-2a_2x-a_4$ a pole of order $4$. Assertion (ii) is the statement
that $[1:x:y]$ is an isomorphism onto a closed subvariety of $\PP^2$; separation of points is
immediate, since $[1:x(P):y(P)]=[1:x(P'):y(P')]$ forces $P=P'$.
\end{proof}

%Alternatively \cite[Prop. VII.1.8]{Miranda} says that \emph{every algebraic curve of genus one is isomorphic to a smooth projective plane curve}. Equation \eqref{eq:F} states one such plane curve.

%\subsection{The sign-change involution}

We recall that the function $x$ realizes $\Xg$ as a double cover $x\colon\Xg\to\PP^1$. We denote by
$\sigma=[-1]$ its deck involution, which is the sign-change map of the elliptic curve.

\begin{lemma}\label{lem:sigma}
Viewing \eqref{eq:F} as a monic quadratic in $y$ over $\CC(x)$, the pull-back $\sigma^\ast$ of the
holomorphic involution $\sigma\colon\Xg\to\Xg$ fixing $P_\infty$ is given by
\begin{equation}\label{eq:sigma}
\sigma^\ast(x,y)=(x,\,-y-a_1x-a_3).
\end{equation}
{The map \eqref{eq:sigma} has exactly four fixed points, namely the $2$-torsion $\Xg[2]$, one of which is $P_\infty$.
At each of the other three points, $\partial F/\partial y$ has a simple zero.}
\end{lemma}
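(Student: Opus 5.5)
The plan is to work algebraically over $\CC(x)$. Since $x$ has degree two, $[\CC(\Xg):\CC(x)]=2$, and $\CC(\Xg)=\CC(x)(y)$, where $y$ satisfies the monic quadratic \eqref{eq:F}. The sum of its two roots is $-(a_1x+a_3)$. The nontrivial automorphism of this quadratic extension over $\CC(x)$ must therefore send $y$ to the other root $-y-a_1x-a_3$ and fix $x$.

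This automorphism corresponds to a holomorphic involution $\sigma$ of $\Xg$ commuting with $x$, i.e.\ the deck transformation of $x\colon\Xg\to\PP^1$. Because $x$ has a unique pole, of order two, at $P_\infty$, the fibre over $\infty$ is the single point $P_\infty$, so $\sigma$ fixes $P_\infty$. In the group law with origin $P_\infty$, a point $P$ and its image $\sigma(P)$ have the same $x$-value. Hence the divisor of $x-x(P)$ is $P+\sigma(P)-2P_\infty$, which means $P+\sigma(P)=0$ and $\sigma=[-1]$. This identifies $\sigma$ with the map in the statement and gives \eqref{eq:sigma}.

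Next I count fixed points. A point $P\neq P_\infty$ is fixed exactly when $y=-y-a_1x-a_3$, i.e.\ $\partial F/\partial y=2y+a_1x+a_3$ vanishes at $P$. By Riemann--Hurwitz for the degree two map $x$ from genus one to genus zero, $0=2(-2)+R$, so $R=4$. There are therefore exactly four ramification points, which are the fixed points of $\sigma$. One of them is $P_\infty$, so three remain in the affine part. Equivalently, the fixed points of $[-1]$ form $\Xg[2]\cong(\ZZ/2)^2$, which has four elements.

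It remains to show the zeros are simple. The function $2y+a_1x+a_3$ lies in $L(3P_\infty)$ with a pole of exact order $3$, so it has exactly three zeros counted with multiplicity. These zeros occur precisely at the affine fixed points of $\sigma$, and there are three distinct such points, so each zero is simple. The one step needing care is that zeros of $2y+a_1x+a_3$ occur only at fixed points. This holds because at a zero the two values of $y$ over that $x$-value coincide, and by Lemma~\ref{lem:smooth}(ii) the pair $(x,y)$ separates points, so $\sigma(P)=P$.

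A local alternative confirms simplicity directly. At such a $P$, Lemma~\ref{lem:smooth}(i) gives $\partial F/\partial x\neq0$, so $y-y(P)$ is a local parameter. Then $x-x(P)$ vanishes to order two, since $P$ is a ramification point. Differentiating $F$ gives $(2y+a_1x+a_3)\,dy=-(\partial F/\partial x)\,dx$. Since $dx$ has a simple zero at $P$ and $dy$ does not vanish there, $2y+a_1x+a_3$ has a simple zero.
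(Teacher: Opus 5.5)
Your proposal is correct and follows the paper's proof: the involution is read off from the sum of the two roots of the monic quadratic in $y$, the count of four fixed points is the Riemann--Hurwitz count (the paper cites \cite[Prop.~III.7.9]{FK} for it), and simplicity follows because $2y+a_1x+a_3$ has a pole of exact order three at $P_\infty$ and vanishes at the three distinct affine fixed points. Two of your steps are left implicit in the paper. You justify $\sigma=[-1]$ via the divisor $P+\sigma(P)-2P_\infty$. You also use Lemma~\ref{lem:smooth}(ii) to show that every zero of $\partial F/\partial y$ is a fixed point, which is what makes the degree count give simple zeros.
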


\begin{proof}
View \eqref{eq:F} as a monic quadratic in $y$ over $\CC(x)$. Its two roots $y,y'$ satisfy
$y+y'=-(a_1x+a_3)$, so the nontrivial deck transformation sends $y\mapsto-y-a_1x-a_3$ and
fixes $x$, which defines $\sigma$. Its fixed points are where the two sheets meet, i.e.\ where
$\partial F/\partial y=2y+a_1x+a_3=0$. Such points are the branch points of $x$, which are the $2$-torsion of
$(\Xg,P_\infty)$. By \cite[Prop. III.7.9]{FK} $x$ has exactly four fixed points. Since $x$ only has a pole at $P_\infty$ one of the fixed points of $\sigma$ is $P_\infty.$ Since $\partial F/\partial y$ has only a pole at $P_\infty,$ the degree of its zero divisor must also be three, so then each zero must be simple.
\end{proof}

\subsection{The holomorphic differential and the weight two form}
We define
\begin{equation}\label{eq:ggvee}
g:=\frac{\partial F}{\partial y}=2y+a_1x+a_3,\qquad
g^{\vee}:=\frac{\partial F}{\partial x}=a_1y-3x^2-2a_2x-a_4.
\end{equation}
Differentiating \eqref{eq:F} gives $g\,dy-(-g^{\vee})\,dx=0$, whence the invariant differential
\begin{equation}\label{eq:omega}
\omega:=\frac{dx}{2y+a_1x+a_3}=\frac{dy}{3x^2+2a_2x+a_4-a_1y}
=\frac{dx}{g}=\frac{dy}{-g^{\vee}}
\end{equation}
is a global holomorphic $1$-form on $\Xg$. Lemma \ref{lem:smooth} guarantees that at every
point at least one of the two expressions is regular and nonzero. Since
$\dim H^0(\Xg,\Omega^1)=1$, the form $\omega$ spans the holomorphic differentials, is holomorphic and  non-vanishing;  see \cite[Prop.~III.1.5]{Sil}.

The weight two automorphic form $f$ for $\Gz$ is defined by
\begin{equation}\label{eq:fdef}
\omega=-2\pi i\,f(\tau)\,d\tau=-f(\tau)\frac{dq_\tau}{q_\tau},
\end{equation}
the second equality holding because $q_\tau=e^{2\pi i\tau}$. Then $f$ is a cusp form, and we write
\begin{equation}\label{eq: f exp}
f(\tau)=\sum_{n\ge1}c_nq_\tau^{\,n}.
\end{equation}
From \eqref{eq:omega} and \eqref{eq:fdef}, and by using the chain rule, we have that
\begin{equation}\label{eq:recursionQ}
f(\tau)\bigl(2y(\tau)+a_1x(\tau)+a_3\bigr)=-q_\tau\frac{dx}{dq_\tau}.
\end{equation}

Some of our residue calculations become complicated at the ramification points of $x.$ We need the following lemma which says that $x$ and $y$ never ramify at the same point of $\Xg.$

\begin{lemma}\label{lem:dichotomy}
For every $P\in\Xg\setminus\{P_\infty\}$,
\[
\ord_P\bigl(x-x(P)\bigr)=1+\ord_P(g),\qquad
\ord_P\bigl(y-y(P)\bigr)=1+\ord_P(g^{\vee}),
\]
and consequently
\[
\min\Bigl\{\ord_P\bigl(x-x(P)\bigr),\ \ord_P\bigl(y-y(P)\bigr)\Bigr\}=1 .
\]
In particular, at least one of the functions $x-x(P)$ and $y-y(P)$ is a local uniformizer at $P$.
In particular, if $\sigma(P)=P$, so that $g(P)=0$, then $y-y(P)$ is a local uniformizer at $P$.
\end{lemma}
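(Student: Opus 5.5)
Our plan is to use the holomorphic, nowhere vanishing differential $\omega$ from \eqref{eq:omega}. Fix $P\in\Xg\setminus\{P_\infty\}$ and let $t$ be a local coordinate at $P$. Then $dx=g\,\omega$ and $dy=-g^{\vee}\omega$. Since $\omega$ has neither zeros nor poles, this gives
\[
\ord_P(dx)=\ord_P(g),\qquad \ord_P(dy)=\ord_P(g^{\vee}).
\]
Here $g$ and $g^{\vee}$ are regular at $P$, because their only pole is at $P_\infty$; they are also not identically zero, by Lemma~\ref{lem:smooth}(i). The function $x-x(P)$ is regular and vanishes at $P$, so writing $x-x(P)=t^k u$ with $u(P)\ne0$ and $k\ge1$ gives $dx=(k t^{k-1}u+t^k u')\,dt$. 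Hence $\ord_P(dx)=k-1$, that is, $\ord_P(x-x(P))=1+\ord_P(g)$. The identical computation with $y$ in place of $x$ gives $\ord_P(y-y(P))=1+\ord_P(g^{\vee})$.

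For the minimum statement, Lemma~\ref{lem:smooth}(i) says that $g$ and $g^{\vee}$ cannot both vanish at $P$. So at least one of $\ord_P(g),\ord_P(g^{\vee})$ equals $0$, and the corresponding order of $x-x(P)$ or $y-y(P)$ equals $1$. Since both orders are at least $1$, the minimum is exactly $1$. A function vanishing to order exactly one at $P$ is a local uniformizer, which gives the uniformizer claim.

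For the last assertion, suppose $\sigma(P)=P$ with $P\ne P_\infty$. By Lemma~\ref{lem:sigma}, $P$ is then a zero of $g$, so $\ord_P(x-x(P))\ge2$. The minimum statement therefore forces $\ord_P(y-y(P))=1$, so $y-y(P)$ is a local uniformizer at $P$.

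The only point that needs care is that $\omega$ is nonvanishing and holomorphic at every finite point; this is exactly what justifies reading off $\ord_P(dx)$ from $g$. The paper already records this after \eqref{eq:omega}: at every point at least one of the two expressions for $\omega$ is regular and nonzero. So no real obstacle remains, and the rest is a local computation.
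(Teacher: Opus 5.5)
Your proof is correct and follows the paper's argument: both read off $\ord_P(x-x(P))-1=\ord_P(dx)=\ord_P(g)$, and likewise for $y$ and $g^{\vee}$, from $dx=g\,\omega$ and $dy=-g^{\vee}\omega$ with $\omega$ holomorphic and nowhere vanishing, and both then use the Jacobian criterion of Lemma~\ref{lem:smooth}(i) for the minimum and Lemma~\ref{lem:sigma} for the fixed points of $\sigma$. Your local computation $dx=(kt^{k-1}u+t^ku')\,dt$ simply spells out the step $\ord_P(dx)=\ord_P(x-x(P))-1$ that the paper states without proof.
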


\begin{proof}
By \eqref{eq:omega} we have $dx=g\,\omega$ and $dy=-g^{\vee}\omega$. Since $\omega$ is a nowhere
vanishing holomorphic $1$-form on $\Xg$, the order of vanishing at $P$ of the differential $dx$
equals $\ord_P(g)$, and $\ord_P(dx)=\ord_P(x-x(P))-1$; the same for $y$ with $g^{\vee}$. The last
assertion of Lemma~\ref{lem:smooth}(i) is that $g$ and $g^{\vee}$ have no common zero on $\Xg$,
which is precisely the Jacobian criterion for \eqref{eq:F}.  Hence, at every $P$ at least one of
$\ord_P(g),\ord_P(g^{\vee})$ is zero. Finally, $g(P)=0$ characterizes the fixed points of $\sigma$
by Lemma~\ref{lem:sigma}, and there $g^{\vee}(P)\ne0$.
\end{proof}

\begin{lemma}\label{lem:coeffs}
With the expansions \eqref{eq:xexp}--\eqref{eq:yexp} one has
\begin{equation}\label{eq:a1eq3s}
a_1=3s,\qquad a_2=2r,\qquad a_3=3x_1+s^3+rs,
\end{equation}
and the first Fourier coefficients of $f$ satisfy
\begin{equation}\label{eq:c1c2}
c_1=1,\qquad c_2=-s,\qquad c_3=-r,\qquad c_4=-2x_1+s^3+2sr .
\end{equation}
\end{lemma}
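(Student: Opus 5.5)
We will obtain both sets of identities by comparing $q_\tau$-expansions. For the Weierstrass coefficients we substitute \eqref{eq:xexp}--\eqref{eq:yexp} into the relation \eqref{eq:F}. For the $c_n$ we use the identity \eqref{eq:recursionQ}. Throughout we write $q=q_\tau$ and keep only the finitely many terms that affect the coefficients in question.

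\emph{Step 1: the coefficients $a_1,a_2,a_3$.} We expand each monomial in \eqref{eq:F} down to order $q^{-3}$, using $x_0=0$, $y_{-2}=0$ and $y_0=0$:
\begin{align*}
y^2&=q^{-6}+2rq^{-4}+2y_1q^{-2}+r^2q^{-2}+O(q^{-1}),\\
x^3&=q^{-6}+3sq^{-5}+3s^2q^{-4}+(s^3+3x_1)q^{-3}+O(q^{-2}),\\
x^2&=q^{-4}+2sq^{-3}+O(q^{-2}),\qquad xy=q^{-5}+sq^{-4}+rq^{-3}+O(q^{-2}).
\end{align*}
We then collect coefficients in \eqref{eq:F}.
\begin{itemize}
\item The $q^{-6}$ terms cancel automatically.
\item The $q^{-5}$ coefficient is $a_1-3s=0$, so $a_1=3s$.
\item The $q^{-4}$ coefficient is $2r+a_1s-3s^2-a_2=0$, so $a_2=2r$.
\item The $q^{-3}$ coefficient is $a_1r+a_3-(s^3+3x_1)-2sa_2=0$. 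Substituting the first two values gives $a_3=3x_1+s^3+rs$.
\end{itemize}
The only care needed is not to drop any cross terms, such as the $q^{-3}$ term of $xy$, which is $r$ because $y$ has no $q^{-2}$ term.

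\emph{Step 2: the coefficients $c_1,\dots,c_4$.} We use \eqref{eq:recursionQ}. On the right-hand side,
\[
-q\,\frac{dx}{dq}=2q^{-2}+sq^{-1}-x_1q+O(q^2).
\]
On the left-hand side we need $g=2y+a_1x+a_3$ through order $q^{0}$. By Step 1,
\[
g=2q^{-3}+3sq^{-2}+(2r+3s^2)q^{-1}+a_3+O(q).
\]
Multiplying by $f=\sum_{n\ge1}c_nq^n$ and comparing coefficients of $q^{-2},q^{-1},q^{0},q^{1}$ gives a triangular system.
\begin{itemize}
\item At $q^{-2}$: $2c_1=2$, so $c_1=1$.
\item At $q^{-1}$: $2c_2+3sc_1=s$, so $c_2=-s$.
\item At $q^{0}$: $2c_3+3sc_2+(2r+3s^2)c_1=0$, so $c_3=-r$.
\item At $q^{1}$: $2c_4+3sc_3+(2r+3s^2)c_2+a_3c_1=-x_1$.
\end{itemize}
Inserting the known values into the last equation gives
\[
2c_4-3sr-2rs-3s^3+3x_1+s^3+rs=-x_1,
\]
hence $c_4=-2x_1+s^3+2sr$.

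The proof is purely mechanical. The only real obstacle is the bookkeeping in the truncated expansions: one must keep track of exactly which coefficients of $x$ and $y$ enter at each order, in particular $x_1$ in $x^3$ at order $q^{-3}$ and the vanishing of $y_{-2}$ and $y_0$. We will double-check each of the two triangular systems by re-substituting the solved values.
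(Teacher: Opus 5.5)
Your proposal is correct and follows the paper's proof: you compare the coefficients of $q^{-5},q^{-4},q^{-3}$ in the Weierstrass relation \eqref{eq:F} to obtain $a_1,a_2,a_3$, and then compare $q$-expansions on both sides of \eqref{eq:recursionQ} to obtain $c_1,\dots,c_4$. Your explicit bookkeeping checks out and supplies the details the paper leaves to the reader, for instance the $q^{-3}$ coefficient $3x_1+s^3$ of $x^3$ and the triangular system for the $c_n$.
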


\begin{proof}
Existence of the relation \eqref{eq:F} is the dimension count above.  The difference of the two sides
is a holomorphic function on the compact connected surface $\Xg$ once the principal parts cancel;
hence the difference is a constant, and the constant is absorbed into $a_6$. The identities \eqref{eq:a1eq3s} follow by
computing the coefficients of $q^{-5},q^{-4},q^{-3}$ in \eqref{eq:F}. For instance, at $q^{-5}$,
\[
[q^{-5}]y^2=0,\quad [q^{-5}](a_1xy)=a_1,\quad [q^{-5}]x^3=3s,\quad [q^{-5}](a_2x^2)=0,
\]
and all remaining terms have order $>-5$, so the vanishing of $[q^{-5}]F$ reads $a_1-3s=0.$ The other computations are similar. The identities \eqref{eq:c1c2} follow by comparing $q_\tau$-expansions on both sides of \eqref{eq:recursionQ}. \end{proof}

\section{Polar structure of the kernel $\hgz(z,\tau)$}\label{sec:kernel}

Using \eqref{eq:omega} and \eqref{eq:fdef}, the kernel \eqref{eq:defnH} can be written as a
difference, namely
\begin{equation}\label{eq:Hdiff}
\hgz(z,\tau)=-\frac{1}{2\pi i}\frac{x'(z)}{x(z)-x(\tau)}
+\frac{y(\tau)-y(z)}{x(z)-x(\tau)}f(z),
\end{equation}
which exhibits it as the naive Asai--Kaneko--Ninomiya candidate corrected by a term that removes the
off-diagonal pole. It also admits a third expression, with $y$ rather than $x$ in the
denominator, which is what makes the polar analysis uniform.

\begin{lemma}\label{lem:twoforms}
Write $N(z,\tau):=y(\tau)+y(z)+a_1x(z)+a_3$ for the numerator of \eqref{eq:defnH} and set
\[
M(z,\tau):=x(z)^2+x(z)x(\tau)+x(\tau)^2+a_2\bigl(x(z)+x(\tau)\bigr)+a_4-a_1y(\tau).
\]
Then, identically on $\Xg\times\Xg$,
\begin{equation}\label{eq:NM}
N(z,\tau)\bigl(y(z)-y(\tau)\bigr)=\bigl(x(z)-x(\tau)\bigr)M(z,\tau),
\end{equation}
and consequently
\begin{equation}\label{eq:yform}
\hgz(z,\tau)=\frac{N(z,\tau)f(z)}{x(z)-x(\tau)}=\frac{M(z,\tau)f(z)}{y(z)-y(\tau)} .
\end{equation}
Moreover, on the diagonal,
\begin{equation}\label{eq:diag}
N(\tau,\tau)=g(\tau),\qquad M(\tau,\tau)=-g^{\vee}(\tau).
\end{equation}
\end{lemma}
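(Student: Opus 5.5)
The identity \eqref{eq:NM} is purely algebraic. It comes from subtracting the Weierstrass relation \eqref{eq:F} at $\tau$ from the same relation at $z$. Write $X=x(z)$, $X'=x(\tau)$, $Y=y(z)$, $Y'=y(\tau)$. Since $F(X,Y)=0=F(X',Y')$, I would expand $F(X,Y)-F(X',Y')$ and group it as (terms divisible by $Y-Y'$) minus (terms divisible by $X-X'$).

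For the $y$-part,
\[
Y^2-Y'^2+(a_1X+a_3)Y-(a_1X'+a_3)Y'=(Y-Y')(Y+Y'+a_1X+a_3)+a_1Y'(X-X').
\]
Here $a_1XY-a_1X'Y'=a_1X(Y-Y')+a_1Y'(X-X')$. The first factor on the right is exactly $N(z,\tau)$. For the $x$-part,
\[
X^3-X'^3+a_2(X^2-X'^2)+a_4(X-X')=(X-X')\bigl(X^2+XX'+X'^2+a_2(X+X')+a_4\bigr).
\]
Setting the difference of the two parts equal to zero gives
\[
N(Y-Y')=(X-X')\bigl(X^2+XX'+X'^2+a_2(X+X')+a_4-a_1Y'\bigr)=(X-X')M,
\]
which is \eqref{eq:NM}. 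The only care needed is placing the cross term $a_1(XY-X'Y')$ so that the leftover $a_1Y'$ lands in $M$ with the stated sign. This bookkeeping is the one spot where an error could creep in.

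For \eqref{eq:yform}, I would divide \eqref{eq:NM} by $(x(z)-x(\tau))(y(z)-y(\tau))$ and multiply by $f(z)$. This is legitimate as an identity of meromorphic functions on $\Xg\times\Xg$, because neither $x(z)-x(\tau)$ nor $y(z)-y(\tau)$ vanishes identically; for instance, each has a pole as $z\to P_\infty$ with $\tau$ fixed. Pointwise, the two expressions agree wherever both denominators are nonzero, and elsewhere they are understood as the same meromorphic function.

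For \eqref{eq:diag}, I substitute $z=\tau$ directly. This gives $N(\tau,\tau)=2y+a_1x+a_3=g$. It also gives $M(\tau,\tau)=3x^2+2a_2x+a_4-a_1y$, which equals $-g^\vee$ by \eqref{eq:ggvee}.
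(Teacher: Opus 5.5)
Your proposal is correct and follows the paper's argument. The paper likewise notes that the left side of \eqref{eq:NM} minus the right side is $F(x(\tau),y(\tau))-F(x(z),y(z))=0-0$, and it reads off \eqref{eq:diag} by setting $z=\tau$. You carry out that grouping explicitly, including the cross term $a_1(XY-X'Y')$, and you add the remark that neither denominator vanishes identically, which justifies \eqref{eq:yform} as an identity of meromorphic functions.
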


\begin{proof}
If one expands \eqref{eq:NM} and subtracts the right side from the left, it simplifies to a difference of
Weierstrass relations \eqref{eq:F}, meaning
$$
F(x(\tau),y(\tau)) - F(x(z),y(z)) = 0-0.
$$
Setting
$z=\tau$ in $N$ and in $M$ gives \eqref{eq:diag} directly from \eqref{eq:ggvee}.
\end{proof}

We define the meromorphic one-form
\begin{equation}
\label{eqOneForm}
{\eta_\lambda:=-2\pi i\,\hgz(z,\lambda)\,dz}
\end{equation}

\begin{lemma} \label{lem:A}
Let $\lambda\in\Xg\setminus\{P_\infty\}$. Then $z\mapsto \hgz(z,\lambda)$ is a weight two
meromorphic modular form for $\Gz$ whose only pole on $\Xg$ is a simple pole at $z=\lambda$, with
residue $-e_\lambda/2\pi i$ in the upper half-plane coordinate, where $e_\lambda=|\Stab_\lambda|$ is the order of the stabilizer. The meromorphic one-form $\eta_\lambda$ is holomorphic on $\Xg\setminus\{\lambda,P_\infty\}$, with only simple poles at $\lambda$ and $P_\infty;$ with
$\Res_{z=\lambda}\eta_\lambda=1$ and $\Res_{z=P_\infty}\eta_\lambda=-1$.
\end{lemma}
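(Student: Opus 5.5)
The plan is to work with the one-form $\eta_\lambda=-2\pi i\,\hgz(z,\lambda)\,dz$ rather than with $\hgz$ directly. Using \eqref{eq:fdef}, we have $-2\pi i f(z)\,dz=\omega$, and so
\[
\eta_\lambda=\frac{N(z,\lambda)}{x(z)-x(\lambda)}\,\omega=\frac{M(z,\lambda)}{y(z)-y(\lambda)}\,\omega ,
\]
where the second form comes from Lemma~\ref{lem:twoforms}. Since $\omega$ is holomorphic and nowhere vanishing on $\Xg$, the poles of $\eta_\lambda$ are exactly the poles of the meromorphic function $\phi_\lambda(z):=N(z,\lambda)/(x(z)-x(\lambda))$. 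Modularity of weight two in $z$ is immediate: $\phi_\lambda$ is a $\Gz$-invariant meromorphic function and $f$ is a weight two form.

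\textbf{Poles away from $P_\infty$.} A pole of $\phi_\lambda$ at $P\ne P_\infty$ requires $x(P)=x(\lambda)$, so $P\in\{\lambda,\sigma(\lambda)\}$. I split into two cases according to whether $\lambda$ is fixed by $\sigma$.

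If $\sigma(\lambda)\neq\lambda$, then $g(\lambda)\ne0$ by Lemma~\ref{lem:sigma}, and $x-x(\lambda)$ has simple zeros at $\lambda$ and at $\sigma(\lambda)$.
\begin{itemize}[label=--]
\item At $\sigma(\lambda)$, use \eqref{eq:sigma}: $y(\sigma\lambda)=-y(\lambda)-a_1x(\lambda)-a_3$, so $N(\sigma\lambda,\lambda)=0$. The pole cancels, and this is the cancellation of the off-diagonal pole.
\item At $\lambda$, we have $N(\lambda,\lambda)=g(\lambda)\ne0$ by \eqref{eq:diag}. Hence there is a simple pole with residue $g(\lambda)/g(\lambda)=1$, because $d(x-x(\lambda))=g\,\omega$.
\end{itemize}

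If $\sigma(\lambda)=\lambda$, then $x-x(\lambda)$ vanishes to order $2$ at $\lambda$, so the $x$-form is awkward. Instead use the $y$-form. By Lemma~\ref{lem:dichotomy}, $y-y(\lambda)$ is a uniformizer at $\lambda$ and $g^\vee(\lambda)\neq 0$. Then $M(\lambda,\lambda)=-g^\vee(\lambda)$ and $d(y-y(\lambda))=-g^\vee\omega$, so the residue of $M\omega/(y-y(\lambda))$ is again $1$. The $y$-form also covers every other $P$ with $y(P)=y(\lambda)$, because any pole of $\phi_\lambda$ must lie where both denominators vanish or where both forms' numerators fail. More precisely, a common pole must have $x(P)=x(\lambda)$ and $y(P)=y(\lambda)$, which forces $P=\lambda$ by Lemma~\ref{lem:smooth}(ii). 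This argument handles both cases uniformly, and I would present it first.

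\textbf{Pole at $P_\infty$ and conversion to the $\tau$-coordinate.} At $P_\infty$, $N$ has a pole of order $3$ and $x(z)-x(\lambda)$ has a pole of order $2$, so $\phi_\lambda$ has a simple pole. Since $\omega$ is holomorphic there, $\eta_\lambda$ has at most a simple pole. By the residue theorem on the compact surface $\Xg$, the residues sum to zero, so $\Res_{P_\infty}\eta_\lambda=-1$; in particular the pole is genuinely simple. One can also check this directly with the $q$-expansion: $\phi_\lambda\sim q^{-1}$ and $\omega=-f\,dq/q$.

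Finally, the residue of $\hgz(\cdot,\lambda)$ must be translated into the upper half-plane coordinate. The local uniformizer at $\lambda$ on $\Xg$ is $t=(z-\lambda)^{e_\lambda}$ up to a unit. A simple pole of $\eta_\lambda$ with residue $1$ in $t$ pulls back to $e_\lambda\,dz/(z-\lambda)$ in $z$, so $-2\pi i\,\hgz\sim e_\lambda/(z-\lambda)$. This gives residue $-e_\lambda/2\pi i$ and a simple pole in $z$ as well. I expect the main obstacle to be the careful bookkeeping at the ramified points: the $\sigma$-fixed $\lambda$ on the algebraic side, and the elliptic $\lambda$ on the analytic side. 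Using the $y$-form together with the point-separation argument resolves the first, and the pullback computation resolves the second.
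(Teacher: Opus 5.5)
Your proof is correct and is essentially the paper's argument: the same removal of the off-diagonal pole at $\sigma(\lambda)$, the same two-chart residue computation at $\lambda$ (the $x$-form when $g(\lambda)\neq0$, the $y$-form of Lemma~\ref{lem:twoforms} when $\sigma(\lambda)=\lambda$), and the same conversion $t=u^{e_\lambda}$ to the upper half-plane coordinate. The only variations are these. First, your point-separation shortcut: since $N$ and $M$ are holomorphic in $z$ off $P_\infty$, any pole must satisfy both $x(P)=x(\lambda)$ and $y(P)=y(\lambda)$. Second, you obtain $\Res_{P_\infty}\eta_\lambda=-1$ from the residue theorem, whereas the paper reads it off the explicit $q_z$-expansion \eqref{eq:Hexp}; the paper needs that expansion anyway, because it later uses $\Phi_0\equiv1$ and $\Phi_1\equiv s$.
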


\begin{proof}
Recall $$\hgz(z,\tau):=\frac{\bigl(y(\tau)+y(z)+a_1x(z)+a_3\bigr)f(z)}{x(z)-x(\tau)}$$

Fix $\lambda\in\Xg\setminus\{P_\infty\}$. Since $y$ and $f$ are holomorphic on
$\Xg\setminus\{P_\infty\}$, the only possible singularities of $z\mapsto \hgz(z,\lambda)$ on
$\Xg\setminus\{P_\infty\}$ are at the points where $x(z)-x(\lambda)=0$; that is, at $z=\lambda$ and,
when $\sigma(\lambda)\ne\lambda$, at $z=\sigma(\lambda)$. We treat these in turn, then the cusp.

\medskip
%\noindent\emph{Step 1: The off-diagonal pole $z=\sigma(\lambda)$ is removable.}
Suppose $\sigma(\lambda)\ne\lambda$. At $z=\sigma(\lambda)$ the denominator
$x(z)-x(\lambda)=x(\sigma(\lambda))-x(\lambda)=0$ vanishes to order one, because $\sigma(\lambda)$ is
not a ramification point of the degree-two map $x$. The numerator also vanishes there, because by
\eqref{eq:sigma}
\[
y(\lambda)+y(\sigma(\lambda))+a_1x(\sigma(\lambda))+a_3
=y(\lambda)+\bigl(-y(\lambda)-a_1x(\lambda)-a_3\bigr)+a_1x(\lambda)+a_3=0 .
\]
Since the numerator vanishes to order at least one and the denominator to order exactly one, the
function $z\mapsto \hgz(z,\lambda)$ does not have a pole at $z=\sigma(\lambda)$.

\medskip
%\noindent\emph{{Step 2: The diagonal point $z=\lambda$, by two charts.}}
Let $t$ be a local coordinate on $\Xg$ at $\lambda$ and write $\omega=h(t)\,dt$ with $h(0)\ne0$,
which is possible because $\omega$ is nowhere vanishing. By \eqref{eq:fdef},
$f(z)\,dz=-\omega/(2\pi i)$.

Suppose first that $g(\lambda)\ne0$, equivalently $\sigma(\lambda)\ne\lambda$. By
Lemma~\ref{lem:dichotomy}, $x-x(\lambda)$ is a local uniformizer at $\lambda$, and by
\eqref{eq:omega} we have $dx = g \omega$. This implies the Taylor series expansion that
$x(z)-x(\lambda)=g(\lambda)h(0)t+O(t^2)$. By \eqref{eq:diag} the numerator
$N(z,\lambda)$ tends to $g(\lambda)\ne0$. Hence, from the $x$-form of \eqref{eq:yform},
\[
\hgz(z,\lambda)\,dz=\frac{g(\lambda)+O(t)}{g(\lambda)h(0)t+O(t^2)}\cdot\frac{-h(t)\,dt}{2\pi i}
=-\frac{1}{2\pi i}\frac{dt}{t}+O(1)\,dt .
\]

Suppose next that $g(\lambda)=0$, or, equivalently, $\sigma(\lambda)=\lambda$. By
Lemma~\ref{lem:smooth}(i) we then have $g^{\vee}(\lambda)\ne0$, so by Lemma~\ref{lem:dichotomy} the
function $y-y(\lambda)$ is a local uniformizer at $\lambda$, and by \eqref{eq:omega} we have
$y(z)-y(\lambda)=-g^{\vee}(\lambda)h(0)t+O(t^2)$. By \eqref{eq:diag} the numerator $M(z,\lambda)$
tends to $-g^{\vee}(\lambda)\ne0$. Hence, from the $y$-form of \eqref{eq:yform},
\[
\hgz(z,\lambda)\,dz=\frac{-g^{\vee}(\lambda)+O(t)}{-g^{\vee}(\lambda)h(0)t+O(t^2)}
\cdot\frac{-h(t)\,dt}{2\pi i}=-\frac{1}{2\pi i}\frac{dt}{t}+O(1)\,dt .
\]

In either case $\hgz(z,\lambda)\,dz$ has a simple pole at $z=\lambda$ with residue $-1/2\pi i$ in
the local coordinate on $\Xg$. If $t$ is related to the upper half-plane coordinate $u$ at $\lambda$
by $t=u^{e_\lambda}$, then $dt/t =\,e_\lambda\,du/u$, so the residue in the upper half-plane
coordinate is $-e_\lambda/2\pi i$, as asserted. In particular $\Res_{z=\lambda}\eta_\lambda=1$ for
$\eta_\lambda=-2\pi i\hgz(z,\lambda)\,dz$.

\medskip
%\noindent\emph{Step 3: The cusp $P_\infty$ is a removable singularity of $\hgz$.}
By \eqref{eq:xexp}--\eqref{eq:yexp}, in a neighbourhood of the cusp the numerator of
\eqref{eq:defnH} has leading term $q_z^{-3}$ and the denominator $x(z)-x(\lambda)$ has leading term
$q_z^{-2}$. Multiplying by $f=q_z+O(q_z^2)$, whose simple zero cancels the residual pole, gives
$\hgz\sim q_z^{0}$ with leading coefficient $1\cdot1\cdot1=1$. Using \eqref{eq:xexp},
\eqref{eq:yexp}, \eqref{eq: f exp}, \eqref{eq:c1c2} and \eqref{eq:a1eq3s} one deduces the first
terms of the $q_z$-expansion:
\begin{align}
\hgz(z,\tau)=1&+s\,q_z+\bigl(x(\tau)+r+c_3-s^2\bigr)q_z^{2}\notag\\
&+\bigl(y(\tau)+a_3+c_4-x_1-2s^3+2sc_3-2sr\bigr)q_z^{3}+O(q_z^{4}).\label{eq:Hexp}
\end{align}
By Lemma \ref{lem:coeffs} these collapse, by which we mean the following. Since $c_3=-r$, the $q_z^2$ coefficient is
$x(\tau)-s^2$, and upon substituting $a_3=3x_1+s^3+rs$ and $c_4=-2x_1+s^3+2sr$, the $q_z^3$ coefficient
is $y(\tau)-sr$. Thus
\begin{equation}\label{eq:Hexpsimp}
{\hgz(z,\tau)=1+s\,q_z+\bigl(x(\tau)-s^2\bigr)q_z^{2}+\bigl(y(\tau)-sr\bigr)q_z^{3}+O(q_z^{4}),}
\end{equation}
so then $\hgz$ has a removable singularity at $P_{\infty}$.

\medskip
%\noindent\emph{Step 4: Summary.}
Since $z\mapsto \hgz(z,\lambda)$ is a rational function in the modular functions $x(z)$ and $y(z)$ times
the weight two cusp form $f(z)$, it is a weight two meromorphic modular form for $\Gz$.  From the above analysis, we conclude that its only pole on $\Xg$ is the simple pole at $z=\lambda$.

Finally we record the residue at $P_\infty$ of the meromorphic one-form  $\eta_\lambda$. Since
$dz=dq_z/(2\pi i\,q_z)$,
\[
\hgz(z,\lambda)\,dz=\Bigl(\sum_{m\ge0} A_m(\lambda)q_z^{\,m}\Bigr)\frac{dq_z}{2\pi i\,q_z},
\qquad\text{so}\qquad
\Res_{z=P_\infty}\hgz(z,\lambda)\,dz=\frac{ A_0(\lambda)}{2\pi i},
\]
whence $\Res_{z=P_\infty}\eta_\lambda=-A_0(\lambda)=-1$ by \eqref{eq:Hexp}.

\end{proof}

\begin{lemma} \label{thmB}
Let $z\in\mathbb H$ be point\footnote{we identify $z$ with its projection in $\Xg.$} that is not an elliptic fixed point of $\Gamma;$ equivalently, $f(z)\neq 0$. Then $\tau\mapsto \hgz(z,\tau)$ is a meromorphic modular function on $\Xg$ with
precisely two poles, both simple, at $\tau=z$ and at $\tau=P_\infty$.  Furthermore,
\[
\hgz(z,\tau)=-f(z)\,q_\tau^{-1}+O_z(1)\qquad\text{as }\tau\to P_\infty .
\]
If $f(z)=0$ then $\tau\mapsto \hgz(z,\tau)$ vanishes identically.
\end{lemma}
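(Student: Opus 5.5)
We fix $z$ and regard $\tau\mapsto \hgz(z,\tau)$ as a rational expression in $x(\tau),y(\tau)$ with coefficients depending on $z$. It is therefore a meromorphic function on $\Xg$, since $x,y$ are modular functions. The first step disposes of the degenerate case. If $f(z)=0$, then both expressions in \eqref{eq:yform} contain the factor $f(z)$, so the function vanishes identically wherever it is defined, and hence everywhere by meromorphy. From now on we assume $f(z)\neq0$. Since $\omega$ is nowhere vanishing and $\omega=-2\pi i f\,d\tau$, the zeros of $f$ occur exactly at the elliptic points, where the local coordinate is $u^{e}$. This justifies the stated equivalence between $f(z)\neq 0$ and $z$ not being elliptic.

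The poles in $\tau$ on $\Xg\setminus\{P_\infty\}$ can only lie where $x(\tau)=x(z)$, that is, at $\tau=z$ and at $\tau=\sigma(z)$. I would treat these two points in turn, following the structure of the proof of Lemma~\ref{lem:A} with the roles of the variables exchanged.

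At $\tau=\sigma(z)\neq z$, the numerator is
\[
N(z,\sigma(z))=-y(z)-a_1x(z)-a_3+y(z)+a_1x(z)+a_3=0,
\]
by \eqref{eq:sigma}. The denominator $x(z)-x(\tau)$ vanishes there only to order one, because $\sigma(z)$ is not a branch point. Hence this singularity is removable.

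At $\tau=z$ there are two cases.
\begin{itemize}
\item If $g(z)\neq0$, then $x-x(z)$ is a local uniformizer at $z$ by Lemma~\ref{lem:dichotomy}. The numerator tends to $g(z)f(z)\neq0$ by \eqref{eq:diag}, so there is a genuine simple pole.
\item If $g(z)=0$, we use the $y$-form in \eqref{eq:yform}. Now $y-y(z)$ is a uniformizer, and the numerator tends to $M(z,z)f(z)=-g^\vee(z)f(z)\neq 0$ by Lemma~\ref{lem:smooth}(i). Again the pole is simple.
\end{itemize}

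At the cusp we expand in $q_\tau$. We have $x(z)-x(\tau)=-q_\tau^{-2}(1+sq_\tau+O(q_\tau^2))$, and the numerator is $y(\tau)+O_z(1)=q_\tau^{-3}+O(q_\tau^{-1})$. This gives
\[
\hgz(z,\tau)=-f(z)\,q_\tau^{-1}\bigl(1+O(q_\tau^2)\bigr)\bigl(1-sq_\tau+O(q_\tau^2)\bigr)=-f(z)q_\tau^{-1}+O_z(1).
\]
This is a simple pole, since $f(z)\neq0$. Alternatively, the $y$-form in \eqref{eq:yform} has $M\sim q_\tau^{-4}$ over $y(z)-y(\tau)\sim -q_\tau^{-3}$, which confirms both the order and the leading coefficient $-f(z)$.

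I do not expect a serious obstacle here. The only delicate points are verifying removability at $\sigma(z)$ and handling the branch-point case at $\tau=z$, and the two forms of \eqref{eq:yform} are designed precisely for that. As a consistency check, a meromorphic function on $\Xg$ cannot have a single simple pole, because genus one has a gap at $1$. This confirms that the pole at $\tau=z$ must indeed be accompanied by the one at $P_\infty$.
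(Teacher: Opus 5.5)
Your proof is correct and follows essentially the same route as the paper: the same reduction to the points $\tau=z$ and $\tau=\sigma(z)$, removability at $\sigma(z)$ via \eqref{eq:sigma}, the $y$-form of \eqref{eq:yform} in the branch-point case, and the same $q_\tau$-expansion at the cusp. You are slightly more explicit than the paper in checking that the pole at $\tau=z$ is genuine (nonzero leading coefficient) in both subcases $g(z)\neq0$ and $g(z)=0$.
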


\begin{proof}

The function $\tau\mapsto \hgz(z,\tau)$ is a rational function in $x(\tau)$ and $y(\tau)$, hence
$\Gz$-invariant and meromorphic on $\Xg$.

Suppose first that $f(z)=0$. By \eqref{eq:defnH}, the kernel then vanishes identically in
$\tau$, and there is nothing more to prove. This occurs at elliptic fixed points. By \eqref{eq:omega} and
\eqref{eq:fdef}, $\omega$ is nowhere vanishing on $\Xg$ while the covering map $\HH\to\Gz\backslash\HH$
ramifies to order $e_z$ at an elliptic fixed point $z$, so $f$ vanishes to order $e_z-1$ there.

Assume now $f(z)\ne0$. In the variable $\tau$ the denominator of \eqref{eq:defnH} vanishes at
$\tau=z$ and at $\tau=\sigma(z)$. At the latter the numerator vanishes as well.  Reasoning analogously as in the proof of Lemma~\ref{lem:A}, when using \eqref{eq:sigma}, we have that
\[
y(\sigma(z))+y(z)+a_1x(z)+a_3=\bigl(-y(z)-a_1x(z)-a_3\bigr)+y(z)+a_1x(z)+a_3=0.
\]
So, if $\sigma(z)\ne z$, the singularity at $\tau=\sigma(z)$ is removable. If $\sigma(z)=z$ the two
points coincide and analysis conducted in the proof of  Lemma~\ref{lem:A} applies verbatim with the roles of the variables
exchanged, using the $y$-form \eqref{eq:yform}. In either case the only pole of
$\tau\mapsto \hgz(z,\tau)$ on $\Xg\setminus\{P_\infty\}$ is a simple pole at $\tau=z$.

At $\tau=P_\infty$, the $q_\tau$-expansions \eqref{eq:xexp}--\eqref{eq:yexp} give
$N(z,\tau)=q_\tau^{-3}+O(q_\tau^{-1})$ and $x(z)-x(\tau)=-q_\tau^{-2}+O(q_\tau^{-1})$, whence
$\hgz(z,\tau)=-f(z)q_\tau^{-1}+O_z(1)$, and the pole at $P_\infty$ is simple with residue
$-f(z)/2\pi i$ in the local coordinate $q_\tau$. Since $f(z)\ne0$ this pole exists with non-zero residue.

\end{proof}

\section{Proof of Theorems \ref{thmC} and \ref{thmD}}\label{sec:family}

In this section we prove Theorems \ref{thmC} and \ref{thmD}. Recall that $U_\tau$ is the neighborhood of $P_\infty$ in the $z$-variable such that $|x(z)|>|x(\tau)|$ We start with the following lemma.

\begin{lemma}\label{lem:denom}
Let $\tau\in\Xg\setminus\{P_\infty\}$. For $z\in U_\tau$ one has that
\[
\frac{1}{x(z)-x(\tau)}=q_z^{2}\sum_{k\ge0}p_k(x(\tau))\,q_z^{\,k},
\]
where $p_k(x(\tau))$ is a polynomial in $x(\tau)$ of degree $\lfloor k/2\rfloor$. The
polynomial $p_{k}$ is monic when $k$ is
even. Consequently, the expansion \eqref{eq:genfn} converges normally on  $U_\tau.$
\end{lemma}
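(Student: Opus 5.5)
The plan is to expand $1/(x(z)-x(\tau))$ as a geometric series in $x(\tau)/x(z)$ and then re-expand in $q_z$. For $z\in U_\tau$ we have $|x(\tau)/x(z)|<1$, so
\[
\frac{1}{x(z)-x(\tau)}=\sum_{j\ge0}\frac{x(\tau)^j}{x(z)^{j+1}} .
\]
By \eqref{eq:xexp}, $x(z)=q_z^{-2}u(q_z)$ with $u(q)=1+sq+\sum_{n\ge1}x_nq^{n+2}$ holomorphic and $u(0)=1$. Hence $1/u$ is holomorphic near $0$ and has a power series $1+O(q)$, and
\[
x(z)^{-(j+1)}=q_z^{2j+2}u(q_z)^{-(j+1)}=q_z^{2j+2}\bigl(1+O(q_z)\bigr).
\]
Substituting, the coefficient of $q_z^{2+k}$ collects contributions only from $j$ with $2j\le k$, because the $j$-th term starts at $q_z^{2+2j}$. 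So
\[
p_k(X)=\sum_{0\le j\le\lfloor k/2\rfloor}\bigl[q^{k-2j}\bigr]u(q)^{-(j+1)}\,X^j .
\]
This is a polynomial of degree at most $\lfloor k/2\rfloor$, and its $X^{\lfloor k/2\rfloor}$-coefficient is $[q^{k-2\lfloor k/2\rfloor}]u^{-(\lfloor k/2\rfloor+1)}$.

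For $k$ even this leading coefficient is $[q^0]u^{-(k/2+1)}=1$, so $p_k$ is monic of degree exactly $k/2$. For $k$ odd it equals $[q^1]u^{-(j+1)}=-(j+1)s$ with $j=(k-1)/2$. This is nonzero when $s\neq0$, which gives degree exactly $\lfloor k/2\rfloor$. If $s=0$, I would read the degree statement as ``at most'', which is what is needed later; alternatively I would note it as the generic situation.

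\textbf{Main obstacle: justifying the rearrangement.} The delicate step is swapping the double sum over $j$ and over powers of $q_z$, i.e.\ showing that the resulting series converges normally on $U_\tau$ itself and not just on a smaller disc. I would handle this by avoiding the rearrangement argument altogether, as follows.
\begin{itemize}
\item For fixed $\tau$, the function $z\mapsto q_z^{-2}/(x(z)-x(\tau))$ is holomorphic in $q_z$ on the region $|x(z)|>|x(\tau)|$ near the cusp, including $q_z=0$, where it equals $1/u(0)=1$.
\item The region $U_\tau$, viewed as a punctured neighbourhood in the $q_z$-disc, contains a disc $|q_z|<\rho(\tau)$. Hence the Taylor expansion in $q_z$ exists and converges absolutely and uniformly on compact subdiscs.
\item Its coefficients are identified with the $p_k(x(\tau))$ above by comparing with the formal computation on a small disc, where absolute convergence of the double series follows from bounding $|u^{-1}|$ by Cauchy estimates.
\end{itemize}

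Finally, multiplying by the holomorphic $q_z$-expansions of $N(z,\tau)f(z)$ gives the convergence of \eqref{eq:genfn}. The product $N(z,\tau)f(z)$ has a pole of order at most $2$ in $q_z$, and this is cancelled by the factor $q_z^2$, so the product is holomorphic at $q_z=0$. Consequently the series \eqref{eq:genfn} converges normally on compact subsets of the disc in $q_z$ contained in $U_\tau$. Normal convergence in the sense intended in the lemma follows because the coefficients $p_k(x(\tau))$ depend polynomially, hence continuously, on $\tau$.
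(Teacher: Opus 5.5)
Your proof is correct, but you organize the inversion differently from the paper. The paper keeps $A=x(\tau)$ inside the unit, $x(z)-A=q^{-2}(1+sq-Aq^2+x_1q^3+\cdots)$. It then reads off the recursion $p_k=Ap_{k-2}-sp_{k-1}-\sum_{i=1}^{k-2}x_ip_{k-2-i}$ from the condition that this unit times its inverse is $1$, and gets the degrees by induction. You expand in $A/x(z)$ first and obtain the closed formula $p_k(X)=\sum_{j\le\lfloor k/2\rfloor}[q^{k-2j}]u^{-(j+1)}X^j$, with a unit $u$ that does not involve $A$.

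The recursion is what one actually implements (step (3) of the paper's Algorithm). Your formula instead shows the top coefficients at a glance: $1$ for even $k$, and $-(j+1)s$ for $k=2j+1$. Your caveat there is a genuine correction. The paper's own recursion gives $p_1=-s$ and $p_3=-2sA-s^3-x_1$, so for odd $k$ the degree equals $\lfloor k/2\rfloor$ only if $s\neq0$; ``at most'' is all the proof of Theorem~\ref{thmC} uses.

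The coefficient identification also needs no Cauchy estimates. The identity $q^2/(u-Xq^2)=\sum_{j\ge0}X^jq^{2j+2}u^{-(j+1)}$ holds in $\CC[X][[q]]$. Specializing $X=x(\tau)$ gives the Taylor series at $q_z=0$ of the holomorphic function $q_z^{-2}/(x(z)-x(\tau))$.

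On convergence you are more careful than the paper. Its only justification is that the geometric series in $x(\tau)/x(z)$ converges on $U_\tau$, which does not address the rearrangement into powers of $q_z$. Your conclusion, normal convergence on compact subsets of the largest disc $\{|q_z|<\rho\}$ contained in $U_\tau\cup\{P_\infty\}$, is the right statement.

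You should, however, delete your final sentence about continuity in $\tau$. It does not enlarge the region, and nothing could. Both $q_\tau$ and $q_{\sigma(\tau)}$ are poles of $1/(x(z)-x(\tau))$, so the $q_z$-series has radius at most $\min(|q_\tau|,|q_{\sigma(\tau)}|)$. Meanwhile $\partial U_\tau$ passes through both points and is in general not a round circle: for $\tau$ high in the cusp, $q_{\sigma(\tau)}\approx-q_\tau$ and $|x(z)|\approx|q_z|^{-2}|1+sq_z|$. So $U_\tau$ typically protrudes beyond the disc of convergence. The disc version is all the later arguments need, since they only extract coefficients at $q_z=0$ or take $\Im(z)\gg\Im(\tau)$.
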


\begin{proof}
Write $q=q_z$, and let
\begin{equation}
  x(z) \;=\; q^{-2} + s\,q^{-1} + \sum_{n\ge1} x_n q^{n}, \qquad A:=x(\tau),
\end{equation}

When factoring out  $q^{-2}$, we get that
$
  x(z)-A \;=\; q^{-2}\Bigl(1 + s\,q - A\,q^{2} + \sum_{n\ge1}x_n q^{\,n+2}\Bigr)
          \;=:\; q^{-2}\,u(q).
$ Then $u(q)$ is a unit, and in particular we have that
$
  \frac{1}{x(z)-A} \;=\; q^{2}\,u(q)^{-1}.
$
Write $u=1+w$ so then
$$
  w(q) \;=\; s\,q \;-\; A\,q^{2} \;+\; x_1q^{3} + x_2q^{4} + x_3q^{5}+\cdots
  \;=\; \sum_{j\ge1} u_j q^{j},
$$
so $u_1=s$, $u_2=-A$, and $u_j=x_{j-2}$ for $j\ge3$.
 Next, by expanding a geometric series, we have that
$u^{-1} \;=\; \sum_{m\ge0}(-1)^m w^{m}.
$ Comparing coefficients in $u\cdot u^{-1}=1$ with
$u^{-1}=\sum_kp_k(A)q^k$
gives $p_0(A)=1$ and $\sum_{j=0}^{k}u_jp_{k-j}(A)=0$ for $k\ge1$ (with $u_0=1$),
\begin{equation}
  \;p_k(A)\;=\;A\,p_{k-2}(A)\;-\;s\,p_{k-1}(A)\;-\;\sum_{i=1}^{k-2}x_i\,p_{k-2-i}(A),\qquad p_0=1,\;
  \label{eq:recursion}
\end{equation}
where $p_0(A)=1$ and $p_{k}(A)=0$ for $k < 0.$ The order of $p_k(A)$ follows by induction from \eqref{eq:recursion}

Since $|x(\tau)/x(z)| < 1$ the geometric series for $\frac{1}{x(z)-x(\tau)}$ converges normally and is
 holomorphic in $z$ in $U_{\tau}$.
\end{proof}

Now, we turn to proof of Theorem \ref{thmC}. Recall the definition \eqref{eq:genfn} of $\{\Phi_m\}_{m\geq 0}$, take $\tau\in\Xg\setminus\{P_\infty\}$ and let $z\in U_\tau$. Recall \eqref{eq:defnH} and set $u = q_z$.  By Lemma~\ref{lem:denom}, on $U_\tau$,
\begin{equation}
  \frac{1}{x(z)-x(\tau)}\;=\;u^{2}\sum_{k\ge0}p_k\bigl(x(\tau)\bigr)u^{k},
  \label{eq:den}
\end{equation}
with $p_k$ a polynomial in $x(\tau)$ of degree $\lfloor k/2\rfloor$, monic for $k$ even.
Using \eqref{eq:xexp} and \eqref{eq:yexp} we have that
\begin{equation}
  y(z)+a_1x(z)+a_3\;=\;\sum_{j\ge-3}b_ju^{j}.
  \label{eq:bdef}
\end{equation}
Recalling that $a_1=3s$, it follows that
\begin{equation}
  b_{-3}=1,\qquad b_{-2}=a_1=3s,\qquad b_{-1}=r+a_1s=r+3s^{2},\qquad b_{0}=a_3,\qquad b_{j}=y_j+3s\,x_j\ (j\ge1).
\label{eq:bvals}
\end{equation}
Using expression \eqref{eq:yform} we then have that
\begin{equation} \label{eqHexpanded3}
  \hgz(z,\tau)=\sum_{k\ge0}p_k(x(\tau))u^{k} \cdot
\Bigl(1+3su+(r+3s^{2})u^{2}+(a_3+y(\tau))u^{3}+\textstyle\sum_{j\ge1}(y_j+3sx_j)u^{j+3}\Bigr)\cdot
  \sum_{\ell\ge1}c_\ell u^{\ell-1}.
\end{equation}

All series on the right-hand side of \eqref{eqHexpanded3} converge absolutely and uniformly. After multiplication, using the
$u= q_z$-expansion \eqref{eq:xexp} of $x(z)$, we deduce that the coefficients $\Phi_m(\tau)$ are
polynomials in $x(\tau)$ and $y(\tau)$, hence holomorphic modular functions on
$\Xg\setminus\{P_\infty\}$. The assertion that $\Phi_0\equiv1$ and $\Phi_1\equiv s$ was established in
\eqref{eq:Hexp}.

We next determine the pole orders. When $m=2\ell\ge2$ is even, it is evident from \eqref{eqHexpanded3}
that the term with the lowest power of $q_\tau$ in the $q_\tau$-expansion of $\Phi_m(\tau)$ is
$p_m(x(\tau))$, a monic polynomial of degree $\lfloor m/2\rfloor=\ell$ in $x(\tau)$.  In view of
\eqref{eq:xexp}, the lead term is therefore $q_\tau^{-m}$. When $m>2$ is odd, write $m=2\ell+3$; the
lead term now stems from the product of $p_{2\ell}(x(\tau))$ and $y(\tau)$ and equals
$q_\tau^{-(2\ell+3)}=q_\tau^{-m}$. This proves the first assertion of Theorem \ref{thmC} and the statement that
\begin{equation} \label{eq:qorder}
\Phi_m(\tau)=q_\tau^{-m}+O\bigl(q_\tau^{-(m-1)}\bigr).
\end{equation}

In order to complete the proof of Theorem \ref{thmC}, we need to prove the expansion \eqref{eq:Phiqexp}. In order to do so, we first prove that the kernel $\hgz(z,\tau)$ can be viewed as the reproducing kernel for $ M^{!,\infty}_{0,\Gz}$, which is the first part of Theorem \ref{thmD}.
Namely, we claim that, for $h\in M^{!,\infty}_{0,\Gz}$ with $q$-expansion
$h(z)=\sum_{n\ge-N}h_nq_z^{\,n}$ equation \eqref{eq:repro} holds true.

Define the meromorphic $1$-form
\[
\phi(z) :=-2\pi i\,h(z)\hgz(z,\tau)\,dz\qquad\text{on }\Xg .
\]
By Lemma~\ref{lem:A} the only poles of $\phi$ in the $z$-variable on $\Xg$ are at $z=\tau$ and at
the cusp $z=P_\infty$.

The residue theorem on the compact Riemann surface\footnote{On the upper half-plane coordinate, the relation is
$t=u^{e_\tau}$ and hence $dt/t= e_\tau\,du/u$: the half-plane residue is $e_\tau$ times the
residue on $\Xg.$} $\Xg$ gives
$\sum_{P\in\Xg}\Res_P(\phi)=0.$  By Lemma~\ref{lem:A},
$\Res_{z=\tau}\hgz(z,\tau)\,dz=-1/2\pi i$, so
\[
\Res_{z=\tau}(\phi)=-2\pi i\,h(\tau)\cdot\Bigl(-\frac{1}{2\pi i}\Bigr)=h(\tau).
\]

At the cusp we have $\phi=-h(z)\hgz(z,\tau)\,dq_z/q_z$, and since $\hgz$ is holomorphic at $q_z=0$,
\[
\Res_{z=P_\infty}(\phi)=-[q_z^{0}]\bigl(h(z)\hgz(z,\tau)\bigr).
\]
Summing the two residues yields \eqref{eq:repro}.

\medskip
Now we prove the second statement of Theorem  \ref{thmD}. Let $g\in M^{!,\infty}_{0,\Gz}$ with Fourier expansion $g(\tau)=\sum_{n\ge-N}g_nq_\tau^{\,n}$ for some integer {$N\ge0$}. We apply \eqref{eq:repro} with $h=g$ and expand $\hgz$ by \eqref{eq:genfn} to get
$$
g(\tau)=[q_z^{\,0}]\left(\left( \sum_{n\ge-N}g_nq_z^{\,n} \right)   \left(\sum_{m\ge0}\Phi_m(\tau)\,q_z^{\,m}\right) \right).
$$
By reading off the constant term, we conclude that $g(\tau)=\sum_{m\ge0}g_{-m}\Phi_m(\tau)$.
The sum is finite because $g_{-m}=0$ for \mbox{$m>N$.} This proves \eqref{eq:recovery}.

Now, we complete the proof of Theorem  \ref{thmC} by proving the expansion \eqref{eq:Phiqexp}.
\medskip
For $m\ge2$ write $\Phi_m(\tau)=\sum_{k\ge-m}a^{(m)}_kq_\tau^{\,k}$. Then $a^{(m)}_{-m}=1$, by \eqref{eq:qorder}. We claim that $a^{(m)}_{-k}=0$ for $2\le k\le m-1$, and
\[
\delta_m:=a^{(m)}_{-1}=-c_m,\qquad \epsilon_m:=a^{(m)}_{0}=s\,c_m,
\qquad\text{hence}\qquad \epsilon_m=-s\,\delta_m .
\]

Applying \eqref{eq:repro} with $h=\Phi_m$ and \eqref{eq:recovery} with $g=\Phi_m$, yields that
\[
\Phi_m(\tau)=a^{(m)}_{-m}\Phi_m(\tau)+a^{(m)}_{-m+1}\Phi_{m-1}(\tau)+\cdots+a^{(m)}_{-1}\Phi_1(\tau)+a^{(m)}_{0}\Phi_0(\tau).
\]
This holds for all $\tau\in\Xg\setminus\{P_\infty\}$. The functions $\Phi_0,\Phi_2,\Phi_3,\dots,
\Phi_{m-1}$ are linearly independent, having pairwise distinct pole orders at $P_\infty$ together
with the constant function.  Hence, upon comparing the two sides and using $a^{(m)}_{-m}=1$, we get that
$a^{(m)}_{-k}=0$ for $2\le k\le m-1$.  Since $\Phi_1\equiv s$ and $\Phi_0\equiv1$, we have
\[
a^{(m)}_{-1}s+a^{(m)}_{0}=0 .
\]
For the remaining identity we use Serre duality. For $m\ge2$ the product $\Phi_m\omega$ is a
meromorphic $1$-form on $\Xg$ whose only pole is at $P_\infty$, so
$\Res_{\tau=P_\infty}(\Phi_m\omega)=0$. Since $\omega=-f(\tau)\,dq_\tau/q_\tau$ by
\eqref{eq:fdef}, this reads $[q_\tau^{0}]\bigl(\Phi_m(\tau)f(\tau)\bigr)=0$, that is
$\sum_{n=1}^{m}c_na^{(m)}_{-n}=c_1a^{(m)}_{-1}+c_ma^{(m)}_{-m}=0$, where we used
$a^{(m)}_{-n}=0$ for $2\le n\le m-1$. Since $c_1=1$ and $a^{(m)}_{-m}=1$ we get $a^{(m)}_{-1}=-c_m$,
and then $a^{(m)}_{0}=sc_m$.
This completes the proof of Theorem \ref{thmC}.
\medskip

It is left to prove the last statement of Theorem \ref{thmD}. Let $g$ have principal part
$g_{-N}q_\tau^{-N}+\cdots+g_{-1}q_\tau^{-1}+g_0.$ Reading off the coefficients of $q_\tau^{-1}$ and on both sides of \eqref{eq:recovery} and using expansion \eqref{eq:Phiqexp} gives
\[
g_{-1}=\sum_{m\ge2}\delta_m\,g_{-m}=-\sum_{m\ge2}c_m\,g_{-m}.
\]

\section{Comparison of weakly holomorphic and real analytic generalizations}\label{sec. rel to NP}

In this section we prove Theorem~\ref{cor:relation H NP}. We start by recalling known results in section \ref{sec np prel}. The proof occupies Section \ref{sec proof D}.

\subsection{Preliminaries} \label{sec np prel}

We recall that a harmonic Maass form of weight $k\in \mathbb{Z}$ for $\Gamma$ is a real-analytic function on $\mathbb H$ that transforms
modular of weight $k$ under $\Gamma$, has at most linear exponential growth at the cusp, and is annihilated by the weighted Laplaciandefined for $z=x+iy\in\HH$ by
\begin{equation*}
\Delta_{k,z}:=-y^2\Bigl(\frac{\partial^2}{\partial x^2}+\frac{\partial^2}{\partial y^2}\Bigr)
+ikv\Bigl(\frac{\partial}{\partial x}+i\frac{\partial}{\partial y}\Bigr).
\end{equation*}
A polar harmonic Maass form is allowed to have isolated singularities of finite order in
$\mathbb H$.  Away from its singularities a polar harmonic Maass form is smooth and annihilated by $\Delta_k$. The local singular behavior is understood in an elliptic local coordinate on the quotient (see the proof of Lemma~\ref{lem:A}).

The Niebur-Poincar\'e series $F_{m}(z,s)$ is defined for $m\in\mathbb Z\setminus \{0\}$, $z\in\HH$  and  $s\in\CC$ with $\Re(s)>1$ by the series 
\begin{equation}\label{Def:Niebur Poinc series}
  F_m(z,s)=F_m^\Gamma(z,s):= \sum_{\gamma\in\Gamma_\infty \backslash \Gamma} e(m\Re(\gamma z))(\Im(\gamma z))^{1/2}I_{s-1/2}(2\pi |m| \Im(\gamma z)),
\end{equation}
which converges absolutely and uniformly on any compact subset of the half plane
$\Re(s)>1$. Here $I_{s-1/2}$ denotes the modified $I$-Bessel function of the first kind and $\Gamma_\infty$ is the stabilizer of the cusp $\infty$.

The function $F_m(z,s)$ admits a meromorphic continuation to the whole complex plane $s\in\CC$ \cite{Ni73}, holomorphic at $s=1$. The Fourier expansion of $F_{m}(z,s)$ in the cusp $\infty$ is derived in \cite{Ni73} and involves the Kloosterman sums $S(m,n;c)$, which we now define.  For
any $m,\, n\in\mathbb Z$, and a real number $c\neq 0$, define
\begin{equation}\label{eq. Kloost sum}
S(m,n;c):= \sum_{\bigl(\begin{smallmatrix}
a&\ast\\c&d\end{smallmatrix}\bigr)\in \Gamma_\infty \backslash \Gamma / \Gamma_\infty } e\left( \frac{ma + nd}{c}\right),
\end{equation}
where $e(x)=\exp(2\pi i x)$. With this notation, the Fourier expansion of $F_m(z,s)$, for   $\Re(s)>1$ and $z=x+iy\in \HH$, is given by
\begin{equation}\label{Four exp Nieb}
F_m(z,s)=e(mx)y^{1/2}I_{s-1/2}(2\pi |m|y) + \sum_{k=-\infty}^{\infty}b_k(y,s;m)e(kx),
\end{equation}
where
$$
b_0(y,s;m) = \frac{y^{1-s}}{(2s-1)\Gamma(s)}2\pi^s |m|^{s-1/2} L_s(m,0)=\frac{y^{1-s}}{(2s-1)}B_0(s;m)
$$
and $L_s(m,n)$ is defined by \eqref{eq:Selberg zeta intro}. Therefore
\begin{equation}\label{eq. rel B to L}
B_0(1;m)=2\pi\sqrt{|m|} L_1(m,0).
\end{equation}
Moreover, from the proof of \cite[Theorem 1]{GS83}, it follows that
\begin{equation}\label{eq. L bound}
L_1(m,0)=L_1(-m,0)=O_\Gamma(m),\quad \text{as  } m\to+\infty.
\end{equation}

For $k\neq 0$ we have that $b_k(y,s;m)=B_k(s;m)y^{1/2}K_{s-1/2}(2\pi |k|y),$
with
$$
B_k(s;m)= 2 \sum_{c>0}S(m,k;c)c^{-1}\cdot \left\{
                                            \begin{array}{ll}
                                              J_{2s-1}\left(\frac{4\pi}{c} \sqrt{mk}\right), & \textrm{\rm if \,}mk>0 \\
                                              I_{2s-1}\left(\frac{4\pi}{c} \sqrt{|mk|}\right), & \textrm{\rm if \,} mk<0.
                                            \end{array}
                                          \right..
$$
In the above expression, $J_{2s-1}$ denotes the $J$-Bessel function and $K_{s-1/2}$ is the modified Bessel function of the second kind. According to the proof of Theorem 6 from \cite{Ni73}, the Fourier expansion \eqref{Four exp Nieb} extends by the principle of analytic continuation to the case when $s=1$. Hence, by putting $B_k(1;m):= \lim_{s\downarrow 1} B_k(s;m)$, and using the special values of $I$-Bessel and $K$-Bessel functions of order $1/2$, we have that
\begin{equation}\label{Four exp Nieb at 1}
F_m(z,1)=\frac{\sinh(2\pi|m|y)}{\pi \sqrt{|m|}}e(mx) + B_0(1;m)+ \sum_{k\in\ZZ\setminus\{0\}}\frac{1}{2 \sqrt{|k|}} e^{-2\pi|k|y}B_k(1;m)e(kx).
\end{equation}

It is clear from \eqref{Four exp Nieb at 1} that for $n>0$ one has that
\begin{equation*}\label{F at s=1}
F_{-n}(z,1) = \frac{1}{2\pi\sqrt{n}}q_{z}^{-n} + B_0(1;-n)+ O(e^{-2\pi y})
\,\,\,\,\,\textrm{\rm as $z=x+iy \rightarrow \infty$}.
\end{equation*}

\subsection{Proof of Theorem~\ref{cor:relation H NP}}\label{sec proof D}

Let us recall that for the normalized cusp form $f(z)=\sum_{m\geq 1}c_m q_z^m$ we have for $m\geq 2$ that
$$
\Phi_m(\tau)= q_\tau^{-m}-c_mq_\tau^{-1}+sc_m+O(q_\tau).
$$
On a different front, applying \cite[Theorem 5]{Ni73} to $\Phi_m$ which is a $\Gamma$ invariant weakly holomorphic function with a pole at $\infty$, for $m\geq 2$ we deduce the following:
$$
\Phi_m(\tau)=2\pi\sqrt{m}F_{-m}(\tau,1)-2\pi c_mF_{-1}(\tau,1)+C(m),
$$
where $C(m)$ is chosen so that
$$
[q_\tau^0]\left(2\pi\sqrt{m}F_{-m}(\tau,1)-2\pi c_mF_{-1}(\tau,1)+C(m)\right)=sc_m.
$$
Therefore,
$$
C(m)=sc_m-2\pi\sqrt{m}B_0(1;-m)+2\pi c_mB_0(1;-1),
$$
meaning that for $m\geq 2$ we have that
\begin{equation}\label{eq. id for sum}
\Phi_m(\tau)=2\pi\sqrt{m}F_{-m}(\tau,1)-2\pi c_mF_{-1}(\tau,1)+sc_m-2\pi\sqrt{m}B_0(1;-m)+2\pi c_mB_0(1;-1)
\end{equation}

For a fixed $\tau$, we multiply the above identity with $q_z^m$, for $z$ such that $\Im(z)\gg \Im(\tau)$. From \cite[Section 1.1]{BJS25} it is known that the series
$$
\mathcal{H}(z,\tau):=2\pi\sum_{m\geq 1}\sqrt{m}F_{-m}(\tau,1)q_z^m
$$
converges absolutely and locally uniformly. Therefore, $z$ such that $\Im(z)\gg \Im(\tau)$, the bound \eqref{eq. L bound} ensures that we may sum the identity \eqref{eq. id for sum} over all $m\geq 2$ (all the series converge absolutely and locally uniformly). By using that $\sum_{m\geq 2}c_mq_z^m = f(z)-q_z$ we get
\begin{align*}
\sum_{m\geq 0}\Phi_m(\tau)q_z^m -1&=2\pi\sum_{m\geq 1}\sqrt{m}F_{-m}(\tau,1)q_z^m + f(z)(s+2\pi B_0(1;-1) - 2\pi F_{-1}(\tau,1)) \\ &- 2\pi \sum_{m\geq 1}\sqrt{m}B_0(1;-m)q_z^m.
\end{align*}

This proves \eqref{eq. identity H and NP}. Part (i) follows by combining Lemma~\ref{thmB} with the result of \cite{BJS25} that $\tau\mapsto\mathcal{H}(z,\tau)$ is a weight zero polar harmonic Maass form. Analogously, part (ii) follows by combining Lemma~\ref{lem:A} with the result of \cite[Thm~1.1(3)]{BJS25} that $z\mapsto\mathcal{H}(z,\tau) +({\rm vol}(\Gamma\backslash\HH) \Im(z))^{-1}$ is a weight two polar harmonic Maass form with the only pole in the fundamental domain at $z=\tau$ with the polar part equal to $\frac{i e_\tau}{2\pi (z-\tau)}.$
(This agrees with the polar structure of $\hgz(z,\tau)$ deduced in Lemma~\ref{lem:A}).

Finally, part (iii) follows from (ii), using the identity \eqref{eq. rel B to L}.

\section{Specialization to Atkin-Lehner groups of genus $1$}\label{sec: arith}

In this section we provide an arithmetic application of our main results and prove Theorem~\ref{thmI}  by specializing to Atkin-Lehner groups of genus one. We also present numerical computations.

Let us first recall the definition of Atkin-Lehner groups of a positive, square-free level $N=p_1\cdots p_r$, including the case $N=1$. A subgroup of $\text{\rm PSL}(2,\mathbb R)$ defined by
\begin{align}
\Gamma_0(N)^+: =\Big\{ \frac{1}{\sqrt{e}}\begin{pmatrix}a&b\\c&d\end{pmatrix}\in
  \text{\rm SL}(2,\mathbb R):  &  \, ad-bc=e, \, a,b,c,d,e\in\mathbb Z, \,e>0,  \\&  e\mid N,\ e\mid a,
   \ e\mid d,\ N\mid c \Big\} /\{\pm \textrm{Id}\}\nonumber
\end{align}
is called the Atkin-Lehner group of level $N$\footnote{This terminology is used because the group is obtained by adding all Atkin-Lehner involutions to the congruence group $\Gamma_0(N)$, see \cite{AtLeh70}.}.  In order to distinguish between different levels $N$, in the notation used above we will add the subscript $N$ to signify that the considered quantities and functions are associated to the group $\Gamma_0(N)^+$.

\subsection{Proof of Theorem~\ref{thmI}}

From Theorem 8 and Proposition 9 of \cite{JST}, in \cite{CJK23} it was deduced that for positive integers $m$
\begin{equation}\label{B0 for A-L groups}
B_{0,N}(1;m)=B_{0,N}(1;-m)= \frac{12\sigma(m)}{\pi\sqrt{m}}\prod\limits_{\nu=1}^r \left(1-
\frac{p_\nu^{\alpha_{p_\nu}(m)+1}(p_\nu -1) }{\left(p_\nu^{\alpha_{p_\nu}(m)+1} - 1\right)(p_\nu+1)}\right) ,
\end{equation}
where $\sigma(m)$ denotes the sum of divisors and $\alpha_p(m)$ is the largest integer such that $p^{\alpha_p(m)}$ divides $m$.
Moreover, ${\rm vol} (\Gamma_0(N)^+ \backslash \mathbb H)= \frac{\pi\sigma(N)}{3\cdot 2^r}$. 

Combining Theorem~\ref{cor:relation H NP} with \eqref{B0 for A-L groups} proves  Theorem~\ref{thmI}.

\subsection{Expansions of the basis $\{\Phi_0\}\cup\{\Phi_m\}_{m\geq 2}$}

We discuss arithmetic examples in the situation when $\Gamma$ is genus 1 Atkin-Lehner group, meaning that $N$ belongs to the set of $38$ integers between $37$ and $238$ listed in \cite{Cum04} for which the genus of the surface $\Gamma_0(N)^+ \backslash \mathbb H$ equals one.
The generators $x_N$ and $y_N$ of the function field of meromorphic functions on the surface $\Gamma_0(N)^+ \backslash \mathbb H$ have been deduced in \cite{JST} for all genus one levels and applied in \cite{JST20, JST21} to find generating polynomials of certain ring class fields. In the sequel we show how to use $x_N$ and $y_N$ in order to deduce the canonical basis of $M_{0,N}^{!,\infty}$. Starting with $x_N,\, y_N$,  one can use Theorem~\ref{thmC} to determine the basis $\{\Phi_0\}\cup\{\Phi_m\}_{m\geq 2}$  of the space $M_{0,N}^{!,\infty}$ of weakly holomorphic modular forms by equating coefficients in the $q$-expansion of the defining formula \eqref{eq:defnH} for $H(z,\tau)$.

In the setting of Atkin-Lehner groups of genus one the first computation of the canonical basis of weight $k$ weakly holomorphic modular forms was conducted in \cite{CK13} for prime levels (and used in \cite{CK22} to generate mock modular forms). Recently, in \cite{KL26}  a canonical basis of weight $k$ weakly holomorphic modular forms was computed for all non-negative weights and a wide class of arithmetic groups of genus zero and one, containing  Atkin-Lehner groups. The approach undertaken in \cite{KL26} is different from ours. Namely, instead of looking at the expansion of the generating kernel $H_N(z,\tau)$ the basis elements in \cite{KL26} are constructed as products of (variants of) a unique normalized cusp from of maximal vanishing order and polynomials in $x_N$. 

%\subsection{Computations}
\medskip
Assume that we have the $q-$expansions of $x(z),y(z),f(z)$ to arbitrary order. The computation of the $q$-expansion of the basis $\{\Phi_0\}\cup\{\Phi_m\}_{m\geq 2}$ is described in the following, simple algorithm.
\medskip

\noindent \textbf{Algorithm}
\begin{enumerate}[label=(\arabic*)]
\item Truncate $x(z),y(z),f(z)$ to order $q_z^{M}$.
\item Form $(y(z)+a_1x(z)+a_3+B) \cdot f(z)$ as a
$\CC[B][q_z^{\pm}]$-series; its lowest term is $q_z^{-2}$.
\item Write $x(z)-A=q_z^{-2}\bigl(1+s\,q_z-A\,q_z^{2}+x_1q_z^{3}+\cdots\bigr)$ and
invert the unit factor by the standard Newton/geometric recursion, obtaining
$1/(x(z)-A)=q_z^{2}\bigl(1-s\,q_z+(A+s^2)q_z^{2}+\cdots\bigr)\in\CC[A][[q_z]]\,q_z^2$.
\item Multiply: $\Phi_m(A,B)=[q_z^{\,m}]\bigl((y(z)+a_1x(z)+a_3+B) \cdot f(z)\cdot(x(z)-A)^{-1}\bigr)$.
\end{enumerate}

Initially, while doing a systematic study of level $N=37$ we used $q$-expansions of the generators $x_N$, $y_N$ for genus one levels $N$ listed in \cite{JSTdata} where the coefficients in the generalized Weierstrass model have been derived (see \cite[Table 6]{JST}). (The $q$-expansions of the  weight two cusp form $f_N(z)$ derived from \eqref{eq:omega} and \eqref{eq:fdef} have also  been checked against the data from \cite{LFMDB}).

However, the precision in \cite{JSTdata} was not high enough so we computed everything fresh \emph{except} the coefficients of the generalized Weierstrass model which we needed \cite[Table 6]{JST}. Our first step was computing $f(z)$ and we did so in two independent ways: one via the Modularity theorem relating $f$ to the Hasse–Weil $L-$function of the elliptic curve. The second way was using exact computations using Pari/GP \cite{PARI2} and SageMath \cite{sagemath} and its implementation of the Atkin-Lehner Hecke operators. Once we had the $q-$expansion of $f(z)$ we used a formal Newton iteration based on \eqref{eq:recursionQ} to very efficiently expand $x(z)$ and $y(z).$

We record $\Phi_m(A,B)=\Phi_m(x_N,y_N)$ and the $q_\tau$ expansions.  A computation is carried out for all genus one levels. In tables \ref{table 37} and \ref{table 238}, respectively, we illustrate results for the smallest genus one level $N=37$ and for the largest genus one level $N=238$.
We list the weight two cusp form, the generators $x$ and $y$ and expansion of generators $\Phi(m)$, for $m\in\{2,3,5,10,15,20\}$ and the corresponding polynomials $\Phi_m(A,B)$. Upon request, we can provide the complete data produced (roughly 100 pages).

\subsection{Computations for level $N = 37$}
\small
\begin{longtable}{>{\centering\arraybackslash}p{0.14\textwidth} | p{0.78\textwidth}}\label{table 37}
         \textbf{Object} & \textbf{Expansion} \\
    \hline
        \endhead
        $f(q)$ & $q - \allowbreak 2q^{2} - \allowbreak 3q^{3} + \allowbreak 2q^{4} - \allowbreak 2q^{5} + \allowbreak 6q^{6} - \allowbreak q^{7} + \allowbreak 6q^{9} + \allowbreak 4q^{10} - \allowbreak 5q^{11} - \allowbreak 6q^{12} - \allowbreak 2q^{13} + \allowbreak 2q^{14} + \allowbreak 6q^{15} - \allowbreak 4q^{16} - \allowbreak 12q^{18} - \allowbreak 4q^{20} + \allowbreak 3q^{21} + \allowbreak 10q^{22} + \allowbreak 2q^{23} - \allowbreak q^{25} + \allowbreak O(q^{26})$ \\
\hline
$x(q)$ & $q^{{-2}} + \allowbreak 2 q^{-1} + \allowbreak 9q + \allowbreak 18q^{2} + \allowbreak 29q^{3} + \allowbreak 51q^{4} + \allowbreak 82q^{5} + \allowbreak 131q^{6} + \allowbreak 199q^{7} + \allowbreak 306q^{8} + \allowbreak 450q^{9} + \allowbreak 666q^{10} + \allowbreak 957q^{11} + \allowbreak 1375q^{12} + \allowbreak 1934q^{13} + \allowbreak 2719q^{14} + \allowbreak 3752q^{15} + \allowbreak 5174q^{16} + \allowbreak 7040q^{17} + \allowbreak 9546q^{18} + \allowbreak 12812q^{19} + \allowbreak 17146q^{20} + \allowbreak 22735q^{21} + \allowbreak 30062q^{22} + \allowbreak 39450q^{23} + \allowbreak 51606q^{24} + \allowbreak 67087q^{25} + \allowbreak O(q^{26})$ \\
\hline
$y(q)$ & $q^{{-3}} + \allowbreak 3 q^{-1} + \allowbreak 19q + \allowbreak 38q^{2} + \allowbreak 93q^{3} + \allowbreak 176q^{4} + \allowbreak 347q^{5} + \allowbreak 630q^{6} + \allowbreak 1139q^{7} + \allowbreak 1944q^{8} + \allowbreak 3305q^{9} + \allowbreak 5404q^{10} + \allowbreak 8762q^{11} + \allowbreak 13848q^{12} + \allowbreak 21667q^{13} + \allowbreak 33262q^{14} + \allowbreak 50609q^{15} + \allowbreak 75836q^{16} + \allowbreak 112683q^{17} + \allowbreak 165464q^{18} + \allowbreak 241038q^{19} + \allowbreak 347680q^{20} + \allowbreak 497979q^{21} + \allowbreak 707254q^{22} + \allowbreak 998056q^{23} + \allowbreak 1398288q^{24} + \allowbreak 1947542q^{25} + \allowbreak O(q^{26})$ \\
\hline
$\Phi_{2}(A, B)$ & $A - \allowbreak 4$ \\
\hline
$\Phi_{2}(q)$ & $q^{{-2}} + \allowbreak 2 q^{-1} - \allowbreak 4 + \allowbreak 9q + \allowbreak 18q^{2} + \allowbreak 29q^{3} + \allowbreak 51q^{4} + \allowbreak 82q^{5} + \allowbreak 131q^{6} + \allowbreak 199q^{7} + \allowbreak 306q^{8} + \allowbreak 450q^{9} + \allowbreak 666q^{10} + \allowbreak 957q^{11} + \allowbreak 1375q^{12} + \allowbreak 1934q^{13} + \allowbreak 2719q^{14} + \allowbreak 3752q^{15} + \allowbreak 5174q^{16} + \allowbreak 7040q^{17} + \allowbreak 9546q^{18} + \allowbreak 12812q^{19} + \allowbreak 17146q^{20} + \allowbreak 22735q^{21} + \allowbreak 30062q^{22} + \allowbreak 39450q^{23} + \allowbreak 51606q^{24} + \allowbreak 67087q^{25} + \allowbreak O(q^{26})$ \\
\hline
$\Phi_{3}(A, B)$ & $B - \allowbreak 6$ \\
\hline
$\Phi_{3}(q)$ & $q^{{-3}} + \allowbreak 3 q^{-1} - \allowbreak 6 + \allowbreak 19q + \allowbreak 38q^{2} + \allowbreak 93q^{3} + \allowbreak 176q^{4} + \allowbreak 347q^{5} + \allowbreak 630q^{6} + \allowbreak 1139q^{7} + \allowbreak 1944q^{8} + \allowbreak 3305q^{9} + \allowbreak 5404q^{10} + \allowbreak 8762q^{11} + \allowbreak 13848q^{12} + \allowbreak 21667q^{13} + \allowbreak 33262q^{14} + \allowbreak 50609q^{15} + \allowbreak 75836q^{16} + \allowbreak 112683q^{17} + \allowbreak 165464q^{18} + \allowbreak 241038q^{19} + \allowbreak 347680q^{20} + \allowbreak 497979q^{21} + \allowbreak 707254q^{22} + \allowbreak 998056q^{23} + \allowbreak 1398288q^{24} + \allowbreak 1947542q^{25} + \allowbreak O(q^{26})$ \\
\hline
$\Phi_{5}(A, B)$ & $-2 A^{2} + \allowbreak A B - \allowbreak 7 A + \allowbreak 5 B + \allowbreak 8$ \\
\hline
$\Phi_{5}(q)$ & $q^{{-5}} + \allowbreak 2 q^{-1} - \allowbreak 4 + \allowbreak 46q + \allowbreak 168q^{2} + \allowbreak 545q^{3} + \allowbreak 1492q^{4} + \allowbreak 3820q^{5} + \allowbreak 8864q^{6} + \allowbreak 19663q^{7} + \allowbreak 41340q^{8} + \allowbreak 83850q^{9} + \allowbreak 163984q^{10} + \allowbreak 312090q^{11} + \allowbreak 578092q^{12} + \allowbreak 1047814q^{13} + \allowbreak 1859716q^{14} + \allowbreak 3242321q^{15} + \allowbreak 5557620q^{16} + \allowbreak 9385244q^{17} + \allowbreak 15626100q^{18} + \allowbreak 25688555q^{19} + \allowbreak 41725576q^{20} + \allowbreak 67033468q^{21} + \allowbreak 106576372q^{22} + \allowbreak 167823384q^{23} + \allowbreak 261866676q^{24} + \allowbreak 405145708q^{25} + \allowbreak O(q^{26})$ \\
\hline
$\Phi_{10}(A, B)$ & $A^{5} + \allowbreak 20 A^{4} - \allowbreak 10 A^{3} B + \allowbreak 60 A^{3} - \allowbreak 135 A^{2} B - \allowbreak 731 A^{2} - \allowbreak 582 A B - \allowbreak 4831 A - \allowbreak 805 B - \allowbreak 7876$ \\
\hline
$\Phi_{10}(q)$ & $q^{{-10}} + \allowbreak -4 q^{-1} + \allowbreak 8 + \allowbreak 318q + \allowbreak 2994q^{2} + \allowbreak 17670q^{3} + \allowbreak 82746q^{4} + \allowbreak 327804q^{5} + \allowbreak 1156467q^{6} + \allowbreak 3719034q^{7} + \allowbreak 11116120q^{8} + \allowbreak 31251300q^{9} + \allowbreak 83453640q^{10} + \allowbreak 213150830q^{11} + \allowbreak 523739466q^{12} + \allowbreak 1243533132q^{13} + \allowbreak 2863838113q^{14} + \allowbreak 6416674872q^{15} + \allowbreak 14024175960q^{16} + \allowbreak 29964085072q^{17} + \allowbreak 62705908590q^{18} + \allowbreak 128739024600q^{19} + \allowbreak 259676030284q^{20} + \allowbreak 515253334554q^{21} + \allowbreak 1006848274446q^{22} + \allowbreak 1939517635652q^{23} + \allowbreak 3686361979608q^{24} + \allowbreak 6918709481946q^{25} + \allowbreak O(q^{26})$ \\
\hline
$\Phi_{15}(A, B)$ & $-12 A^{7} + \allowbreak A^{6} B - \allowbreak 292 A^{6} + \allowbreak 105 A^{5} B - \allowbreak 2109 A^{5} + \allowbreak 2018 A^{4} B + \allowbreak 2378 A^{4} + \allowbreak 16533 A^{3} B + \allowbreak 107317 A^{3} + \allowbreak 67626 A^{2} B + \allowbreak 569700 A^{2} + \allowbreak 135844 A B + \allowbreak 1265020 A + \allowbreak 106386 B + \allowbreak 1044936$ \\
\hline
$\Phi_{15}(q)$ & $q^{{-15}} + \allowbreak -6 q^{-1} + \allowbreak 12 + \allowbreak 1597q + \allowbreak 26516q^{2} + \allowbreak 251410q^{3} + \allowbreak 1733924q^{4} + \allowbreak 9726269q^{5} + \allowbreak 46877208q^{6} + \allowbreak 201098126q^{7} + \allowbreak 785596140q^{8} + \allowbreak 2840958640q^{9} + \allowbreak 9625012756q^{10} + \allowbreak 30832356830q^{11} + \allowbreak 94058739544q^{12} + \allowbreak 274844284558q^{13} + \allowbreak 772880003512q^{14} + \allowbreak 2099769571800q^{15} + \allowbreak 5529542105420q^{16} + \allowbreak 14153887874883q^{17} + \allowbreak 35299667276920q^{18} + \allowbreak 85955628644820q^{19} + \allowbreak 204725984022388q^{20} + \allowbreak 477702935321056q^{21} + \allowbreak 1093555621680724q^{22} + \allowbreak 2459044582812568q^{23} + \allowbreak 5437780437156432q^{24} + \allowbreak 11837083082322323q^{25} + \allowbreak O(q^{26})$ \\
\hline
$\Phi_{20}(A, B)$ & $A^{10} + \allowbreak 140 A^{9} - \allowbreak 20 A^{8} B + \allowbreak 3820 A^{8} - \allowbreak 1270 A^{7} B + \allowbreak 42103 A^{7} - \allowbreak 28064 A^{6} B + \allowbreak 154828 A^{6} - \allowbreak 316360 A^{5} B - \allowbreak 974663 A^{5} - \allowbreak 2071947 A^{4} B - \allowbreak 13538407 A^{4} - \allowbreak 8223162 A^{3} B - \allowbreak 66877823 A^{3} - \allowbreak 19496464 A^{2} B - \allowbreak 174301555 A^{2} - \allowbreak 25371536 A B - \allowbreak 237718245 A - \allowbreak 13905709 B - \allowbreak 133504196$ \\
\hline
$\Phi_{20}(q)$ & $q^{{-20}} + \allowbreak 4 q^{-1} - \allowbreak 8 + \allowbreak 5932q + \allowbreak 165546q^{2} + \allowbreak 2311960q^{3} + \allowbreak 22232454q^{4} + \allowbreak 166899968q^{5} + \allowbreak 1047479398q^{6} + \allowbreak 5727637696q^{7} + \allowbreak 28048353210q^{8} + \allowbreak 125411651280q^{9} + \allowbreak 519352063068q^{10} + \allowbreak 2013695928540q^{11} + \allowbreak 7372723964999q^{12} + \allowbreak 25663761554648q^{13} + \allowbreak 85404828841922q^{14} + \allowbreak 272967978699316q^{15} + \allowbreak 841172839882160q^{16} + \allowbreak 2507421877813168q^{17} + \allowbreak 7250373947410740q^{18} + \allowbreak 20386482808783720q^{19} + \allowbreak 55859425561064720q^{20} + \allowbreak 149428836363470276q^{21} + \allowbreak 390907494780269074q^{22} + \allowbreak 1001506736441573568q^{23} + \allowbreak 2516200660694157642q^{24} + \allowbreak 6206711535594291204q^{25} + \allowbreak O(q^{26})$ \\
\hline
\end{longtable}

\subsection{Computations for level $N = 238$}

\begin{longtable}{>{\centering\arraybackslash}p{0.14\textwidth} | p{0.78\textwidth}}\label{table 238}
        \textbf{Object} & \textbf{Expansion} \\
        \hline
        \endhead
        $f(q)$ & $q - \allowbreak q^{2} + \allowbreak q^{4} - \allowbreak 2q^{5} - \allowbreak q^{7} - \allowbreak q^{8} - \allowbreak 3q^{9} + \allowbreak 2q^{10} - \allowbreak 2q^{11} + \allowbreak q^{14} + \allowbreak q^{16} - \allowbreak q^{17} + \allowbreak 3q^{18} - \allowbreak 2q^{19} - \allowbreak 2q^{20} + \allowbreak 2q^{22} - \allowbreak 8q^{23} - \allowbreak q^{25} + \allowbreak O(q^{26})$ \\
\hline
$x(q)$ & $q^{{-2}} + \allowbreak q^{-1} + \allowbreak q^{3} + \allowbreak q^{4} + \allowbreak q^{5} + \allowbreak 2q^{6} + \allowbreak q^{7} + \allowbreak 2q^{8} + \allowbreak q^{9} + \allowbreak 2q^{10} + \allowbreak 2q^{11} + \allowbreak 2q^{12} + \allowbreak 2q^{13} + \allowbreak 3q^{14} + \allowbreak 2q^{15} + \allowbreak 3q^{16} + \allowbreak 3q^{17} + \allowbreak 4q^{18} + \allowbreak 3q^{19} + \allowbreak 5q^{20} + \allowbreak 4q^{21} + \allowbreak 6q^{22} + \allowbreak 5q^{23} + \allowbreak 6q^{24} + \allowbreak 6q^{25} + \allowbreak O(q^{26})$ \\
\hline
$y(q)$ & $q^{{-3}} + \allowbreak q + \allowbreak q^{2} + \allowbreak q^{3} + \allowbreak q^{4} + \allowbreak 2q^{5} + \allowbreak q^{6} + \allowbreak 2q^{7} + \allowbreak q^{8} + \allowbreak 4q^{9} + \allowbreak 2q^{10} + \allowbreak 5q^{11} + \allowbreak 4q^{12} + \allowbreak 6q^{13} + \allowbreak 5q^{14} + \allowbreak 8q^{15} + \allowbreak 7q^{16} + \allowbreak 10q^{17} + \allowbreak 9q^{18} + \allowbreak 13q^{19} + \allowbreak 11q^{20} + \allowbreak 17q^{21} + \allowbreak 14q^{22} + \allowbreak 20q^{23} + \allowbreak 18q^{24} + \allowbreak 26q^{25} + \allowbreak O(q^{26})$ \\
\hline
$\Phi_{2}(A, B)$ & $A - \allowbreak 1$ \\
\hline
$\Phi_{2}(q)$ & $q^{{-2}} + \allowbreak q^{-1} - \allowbreak 1 + \allowbreak q^{3} + \allowbreak q^{4} + \allowbreak q^{5} + \allowbreak 2q^{6} + \allowbreak q^{7} + \allowbreak 2q^{8} + \allowbreak q^{9} + \allowbreak 2q^{10} + \allowbreak 2q^{11} + \allowbreak 2q^{12} + \allowbreak 2q^{13} + \allowbreak 3q^{14} + \allowbreak 2q^{15} + \allowbreak 3q^{16} + \allowbreak 3q^{17} + \allowbreak 4q^{18} + \allowbreak 3q^{19} + \allowbreak 5q^{20} + \allowbreak 4q^{21} + \allowbreak 6q^{22} + \allowbreak 5q^{23} + \allowbreak 6q^{24} + \allowbreak 6q^{25} + \allowbreak O(q^{26})$ \\
\hline
$\Phi_{3}(A, B)$ & $B$ \\
\hline
$\Phi_{3}(q)$ & $q^{{-3}} + \allowbreak q + \allowbreak q^{2} + \allowbreak q^{3} + \allowbreak q^{4} + \allowbreak 2q^{5} + \allowbreak q^{6} + \allowbreak 2q^{7} + \allowbreak q^{8} + \allowbreak 4q^{9} + \allowbreak 2q^{10} + \allowbreak 5q^{11} + \allowbreak 4q^{12} + \allowbreak 6q^{13} + \allowbreak 5q^{14} + \allowbreak 8q^{15} + \allowbreak 7q^{16} + \allowbreak 10q^{17} + \allowbreak 9q^{18} + \allowbreak 13q^{19} + \allowbreak 11q^{20} + \allowbreak 17q^{21} + \allowbreak 14q^{22} + \allowbreak 20q^{23} + \allowbreak 18q^{24} + \allowbreak 26q^{25} + \allowbreak O(q^{26})$ \\
\hline
$\Phi_{5}(A, B)$ & $-A^{2} + \allowbreak A B + \allowbreak A + \allowbreak 2 B - \allowbreak 5$ \\
\hline
$\Phi_{5}(q)$ & $q^{{-5}} + \allowbreak 2 q^{-1} - \allowbreak 2 + \allowbreak 3q + \allowbreak q^{2} + \allowbreak 4q^{3} + \allowbreak 2q^{4} + \allowbreak 6q^{5} + \allowbreak 4q^{6} + \allowbreak 9q^{7} + \allowbreak 10q^{8} + \allowbreak 13q^{9} + \allowbreak 13q^{10} + \allowbreak 21q^{11} + \allowbreak 19q^{12} + \allowbreak 29q^{13} + \allowbreak 29q^{14} + \allowbreak 45q^{15} + \allowbreak 41q^{16} + \allowbreak 62q^{17} + \allowbreak 62q^{18} + \allowbreak 87q^{19} + \allowbreak 87q^{20} + \allowbreak 118q^{21} + \allowbreak 124q^{22} + \allowbreak 165q^{23} + \allowbreak 169q^{24} + \allowbreak 222q^{25} + \allowbreak O(q^{26})$ \\
\hline
$\Phi_{10}(A, B)$ & $A^{5} + \allowbreak 5 A^{4} - \allowbreak 5 A^{3} B - \allowbreak 15 A^{2} B + \allowbreak 11 A^{2} - \allowbreak 6 A B + \allowbreak 4 A - \allowbreak 2 B - \allowbreak 5$ \\
\hline
$\Phi_{10}(q)$ & $q^{{-10}} + \allowbreak -2 q^{-1} + \allowbreak 2 + \allowbreak 2q + \allowbreak 9q^{2} + \allowbreak 6q^{3} + \allowbreak 23q^{4} + \allowbreak 24q^{5} + \allowbreak 46q^{6} + \allowbreak 56q^{7} + \allowbreak 100q^{8} + \allowbreak 122q^{9} + \allowbreak 202q^{10} + \allowbreak 244q^{11} + \allowbreak 376q^{12} + \allowbreak 456q^{13} + \allowbreak 681q^{14} + \allowbreak 826q^{15} + \allowbreak 1194q^{16} + \allowbreak 1448q^{17} + \allowbreak 2043q^{18} + \allowbreak 2478q^{19} + \allowbreak 3379q^{20} + \allowbreak 4112q^{21} + \allowbreak 5521q^{22} + \allowbreak 6690q^{23} + \allowbreak 8821q^{24} + \allowbreak 10694q^{25} + \allowbreak O(q^{26})$ \\
\hline
$\Phi_{15}(A, B)$ & $-6 A^{7} + \allowbreak A^{6} B - \allowbreak 29 A^{6} + \allowbreak 27 A^{5} B - \allowbreak 36 A^{5} + \allowbreak 98 A^{4} B - \allowbreak 80 A^{4} + \allowbreak 119 A^{3} B - \allowbreak 96 A^{3} + \allowbreak 78 A^{2} B - \allowbreak 63 A^{2} - \allowbreak 8 A B + \allowbreak 5 A - \allowbreak 14 B + \allowbreak 20$ \\
\hline
$\Phi_{15}(q)$ & $q^{{-15}} + \allowbreak 10q + \allowbreak 10q^{2} + \allowbreak 40q^{3} + \allowbreak 55q^{4} + \allowbreak 131q^{5} + \allowbreak 185q^{6} + \allowbreak 350q^{7} + \allowbreak 505q^{8} + \allowbreak 890q^{9} + \allowbreak 1241q^{10} + \allowbreak 2045q^{11} + \allowbreak 2865q^{12} + \allowbreak 4485q^{13} + \allowbreak 6170q^{14} + \allowbreak 9326q^{15} + \allowbreak 12700q^{16} + \allowbreak 18640q^{17} + \allowbreak 25165q^{18} + \allowbreak 36025q^{19} + \allowbreak 48206q^{20} + \allowbreak 67580q^{21} + \allowbreak 89690q^{22} + \allowbreak 123560q^{23} + \allowbreak 162715q^{24} + \allowbreak 220922q^{25} + \allowbreak O(q^{26})$ \\
\hline
$\Phi_{20}(A, B)$ & $A^{10} + \allowbreak 35 A^{9} - \allowbreak 10 A^{8} B + \allowbreak 175 A^{8} - \allowbreak 155 A^{7} B + \allowbreak 357 A^{7} - \allowbreak 637 A^{6} B + \allowbreak 668 A^{6} - \allowbreak 1179 A^{5} B + \allowbreak 976 A^{5} - \allowbreak 1271 A^{4} B + \allowbreak 1015 A^{4} - \allowbreak 618 A^{3} B + \allowbreak 602 A^{3} + \allowbreak 54 A^{2} B + \allowbreak 182 A B - \allowbreak 174 A + \allowbreak 50 B - \allowbreak 45$ \\
\hline
$\Phi_{20}(q)$ & $q^{{-20}} + \allowbreak 2 q^{-1} - \allowbreak 2 + \allowbreak 8q + \allowbreak 46q^{2} + \allowbreak 74q^{3} + \allowbreak 207q^{4} + \allowbreak 346q^{5} + \allowbreak 754q^{6} + \allowbreak 1234q^{7} + \allowbreak 2395q^{8} + \allowbreak 3818q^{9} + \allowbreak 6760q^{10} + \allowbreak 10516q^{11} + \allowbreak 17654q^{12} + \allowbreak 26844q^{13} + \allowbreak 43114q^{14} + \allowbreak 64276q^{15} + \allowbreak 99846q^{16} + \allowbreak 146222q^{17} + \allowbreak 220882q^{18} + \allowbreak 318742q^{19} + \allowbreak 470700q^{20} + \allowbreak 669888q^{21} + \allowbreak 970184q^{22} + \allowbreak 1363970q^{23} + \allowbreak 1942724q^{24} + \allowbreak 2700508q^{25} + \allowbreak O(q^{26})$ \\
\hline
\end{longtable}

\vspace{5mm}

\noindent
Joshua S. Friedman \\
Department of Mathematics and Science \\
\textsc{United States Merchant Marine Academy} \\
300 Steamboat Road \\
Kings Point, NY 11024 \\
U.S.A. \\
e-mail: FriedmanJ@usmma.edu, joshua@math.sunysb.edu, CrownEagle@gmail.com

\vspace{5mm}
\noindent
Jay Jorgenson \\
Department of Mathematics \\
The City College of New York \\
Convent Avenue at 138th Street \\
New York, NY 10031
U.S.A. \\
e-mail: jjorgenson@mindspring.com

\vspace{5mm}

\noindent
Lejla Smajlovi\'c \\
Department of Mathematics and Computer Science \\
University of Sarajevo\\
Zmaja od Bosne 35, 71 000 Sarajevo\\
Bosnia and Herzegovina\\
e-mail: lejlas@pmf.unsa.ba

\normalsize

\begin{thebibliography}{9}

\bibitem{Al03} S. Ahlgren, \emph{The theta-operator and the divisors of modular forms on genus zero
subgroups},. Math. Res. Lett. 10 (2003), 787--798.

\bibitem{AKN} T.~Asai, M.~Kaneko, H.~Ninomiya,
\emph{Zeros of certain modular functions and an application},
Comment. Math. Univ. St. Paul. \textbf{46} (1997), 93--101.

\bibitem{AtLeh70} A.~O.~L. Atkin and J. Lehner,  \emph{Hecke operators on $\Gamma_0(m)$},
Math.\ Ann.\ \textbf{185} (1970), 134--160.

\bibitem{BS25} A. Bhand, R. K. Singh,
\emph{Generalisation of the Asai-Kaneko-Ninomiya identity to higher level}, J. Math. Anal. Appl. 542, No. 2, Article ID 128849, 24 p. (2025).

\bibitem{BM21} L. Beneish, M. H. Mertens, \emph{On Weierstrass mock modular forms and a dimension formula for certain vertex operator algebras}, Math. Z. 297, No. 1-2 (2021), 59--80.

\bibitem{BL15} L. Beneish, H. Larson, \emph{Traces of singular values of Hauptmoduln}, Int. J. Number Theory, 11 (2015), 1027--1048.

\bibitem{BJS25} K. Bringmann, J. Jorgenson, L. Smajlović \emph{On a generating function of Niebur-Poincaré series}, Preprint, arXiv:2512.13167 [math.NT] (2025).

\bibitem{BK16} K. Bringmann, B. Kane, A problem of Petersson about weight 0 meromorphic modular forms, Res. Math.
Sci. 3 (2016), paper no. 24.

\bibitem{BKLOR18} K. Bringmann, B. Kane, S. L\"obrich, K. Ono, L. Rolen, On divisors of modular forms, Adv. Math. 329 (2018), 541--554.

\bibitem{BKO04} J. Bruinier, W. Kohnen, and K. Ono, The arithmetic of the values of modular functions and the divisors of
modular forms, Compos. Math. 140 (2004), no. 3, 552--566.

\bibitem{Ca10} S. Carnahan, \emph{Generalized moonshine I: Genus-zero functions}, Algebra and Number
Theory 4 No.6 (2010), 649--679.

\bibitem{Ca12} S. Carnahan, \emph{Generalized moonshine, II: Borcherds products}, Duke Math. J. 161 No.5 (2012), 893--950.

\bibitem{CK13} S. Choi, C. H. Kim,  \emph{Basis for the space of weakly holomorphic modular forms in higher level cases}, J. Number Theory 133(4) (2013), 1300--1311.

\bibitem{CK22} S. Choi, C. H. Kim,  \emph{Explicit construction of mock modular forms from weakly holomorphic Hecke eigenforms}, Open Math. 20(1) (2022), 313--332.

\bibitem{CJK23} J. W. Cogdell, J. Jorgenson, L. Smajlović, \emph{Kronecker limit functions and an extension of the Rohrlich-Jensen formula}, Nagoya Math. J. 252 (2023), 810--841.

\bibitem{Cum04}
C.~J.\ Cummins,
\emph{Congruence subgroups of groups commensurable with ${\mathrm PSL}(2,\mathbb Z)$ of genus $0$ and $1$},
Experiment.\ Math.\ \textbf{13} (2004), 361--382.

%\bibitem{DS} F.~Diamond, J.~Shurman,
%\emph{A First Course in Modular Forms}, GTM 228, Springer, 2005.

\bibitem{DJ08} W. Duke, P. Jenkins, \emph{On the zeros and coefficients of certain weakly holomorphic modular forms}, Pure Appl. Math. Q. 4 (2008), no. 4, Special Issue: In honor of Jean-Pierre Serre. Part 1, 1327--1340.

    \bibitem{E-G09}  A. El--Guindy, \emph{Fourier expansions with modular form coefficients}, Int. J. Number Theory 5 (2009), no. 8, 1433--1446.


\bibitem{Fa1903} G. Faber, \"Uber polynomische Entwicklungen, Math. Ann. 57 (1903), 389--408.

\bibitem{FK} H.~M.~Farkas, I.~Kra,
\emph{Riemann Surfaces}, 2nd ed., GTM 71, Springer, 1992.

\bibitem{GS83}  D. Goldfeld, P. Sarnak, \emph{Sums of Kloosterman sums}, Invent. Math. 71  no. 2 (1983), 243--250.


\bibitem{HJ14} A. Haddock, P. Jenkins, \emph{Zeros of weakly holomorphic modular forms of level 4}, Int. J. Number Theory 10 (2014), no. 2, 455--470.

\bibitem{PARI2}
    The PARI~Group, PARI/GP version \texttt{2.15.4}, Univ. Bordeaux, 2023,
    \url{http://pari.math.u-bordeaux.fr/}.


\bibitem{sagemath}
 SageMath, the Sage Mathematics Software System
\url{https://www.sagemath.org}



%\bibitem{Hartshorne} R.~Hartshorne, \emph{Algebraic Geometry}, GTM 52, Springer, 1977.

%\bibitem{Iwa02} H. Iwaniec,  \emph{Spectral methods of automorphic forms}.
%Graduate Studies in Mathematics \textbf{53}, American Mathematical Society, Providence, RI, 2002.

\bibitem{JM19} P. Jenkins, G. Molnar, \emph{Zagier duality for level p
 weakly holomorphic modular forms}, Ramanujan J. 50, No. 1 (2019), 93--109.

\bibitem{JKK23} D. Jeon, S.--Y. Kang, C. H. Kim, \emph{The Hecke system of harmonic Maass functions and applications to modular curves of higher genera}, The Ramanujan Journal 62 (2023), 675--717.

\bibitem{JST} J.~Jorgenson, L.~Smajlovi\'c, H.~Then,
\emph{Kronecker's limit formula, holomorphic modular functions, and
$q$-expansions on certain arithmetic groups},
Exp. Math. \textbf{25} (2016), no.~3, 295--319.

%\bibitem{JST16b} J. Jorgenson, L. Smajlovi\'c, H. Then,  \emph{Certain aspects of holomorphic function theory on some genus-zero arithmetic groups}, LMS J. Comput. Math. 19(2) (2016), 360--381.

\bibitem{JSTdata} J. Jorgenson, L. Smajlovi\'c, H. Then, Data page, Available at \url{http://www.efsa.unsa.ba/\~lejla.smajlovic/jst2/}.

\bibitem{JST20} J.~Jorgenson, L.~Smajlovi\'c, H.~Then,
\emph{On the evaluation of singular invariants for canonical generators of certain genus one arithmetic groups}, Exp. Math. 29 no. 1 (2020), 1--27.

\bibitem{JST21} J.~Jorgenson, L.~Smajlovi\'c, H.~Then, \emph{An approach for computing generators of class fields of imaginary quadratic number fields using the Schwarzian derivative}, Math. Comp. 91 no. 333 (2021), 331--379.


\bibitem{KL26} C. H. Kim, K. S. Lee (02 Jun 2026): Basis of Weakly
Holomorphic Modular Form Spaces for Squarefree Level Cases, Experimental Mathematics (2026), published online, DOI: 10.1080/10586458.2026.2622042.

\bibitem{LFMDB} The LMFDB Collaboration, The L-functions and modular forms database. Available at \url{http://www.lmfdb.org}.

\bibitem{Miranda} R.~Miranda,
\emph{Algebraic Curves and Riemann Surfaces}, GSM 5, Amer. Math. Soc., 1995.

\bibitem{Ni73} D. Niebur, \emph{A class of nonanalytic automorphic functions}, Nagoya Math. J. \textbf{52} (1973), 133--145.

\bibitem{Se65} A. Selberg, \emph{On the estimation of Fourier coefficients of modular forms}, in: Theory of Numbers, Proc. Sympos. Pure Math. 8, AMS (1965), 1--15.



\bibitem{Sil} J.~H.~Silverman,
\emph{The Arithmetic of Elliptic Curves}, 2nd ed., GTM 106, Springer, 2009.

\bibitem{Ye19} D. Ye, \emph{On the generating function of a canonical basis for} $M^{!,\infty}_0(\Gamma)$, Results Math., 74 (2019), 1--11.

\bibitem{Ye22} D. Ye, \emph{Difference of a Hauptmodul for} $\Gamma_0(N)$, Sci. China Math. 65 (2022), 221--258.

\bibitem{Za02} D. Zagier, Traces of singular moduli, Motives, polylogarithms and Hodge theory, Part I (Irvine, CA, 1998),
211--244, Int. Press Lect. Ser., 3, I, Int. Press, Somerville, MA, 2002.


%\bibitem{X37note}
%\emph{A bivariate Cauchy-kernel generating function for the function-field
%basis on $X_{37}=\Gamma_0(37)^+\backslash\HH^\ast$}, manuscript, 2026.

\end{thebibliography}
\end{document}